\newcommand{\stkout}[1]{\ifmmode\text{\sout{\ensuremath{#1}}}\else\sout{#1}\fi}
\newcommand{\F}{\mathcal{F}}
\newenvironment{bibpropia}
  {\bibdiv\biblist\setupbib}
  {\endbiblist\endbibdiv}
\def\setupbib{\catcode`@=\active}
\def\gatherkey#1#2{\gatherkeyaux{#1}#2\gatherkeyaux}
\def\gatherkeyaux#1#2,#3\gatherkeyaux{\bib{#2}{#1}{#3}}
\newtheorem{thm}{Theorem}
\newtheorem{prop}{Proposition}
\newtheorem{lemma}{Lemma}
\newtheorem*{conj}{Conjecture}
\theoremstyle{remark}
\newtheorem{defi}{Definition}
\newcommand{\Lip}{\operatorname{Lip}}
\newcommand{\R}{\mathbb{R}}
\newcommand{\beq}{\begin{equation}}
\newcommand{\eeq}{\end{equation}}
\title[Uncertainty principle and  Vaserstein distance]{An enhanced uncertainty 
principle for the Vaserstein distance}
\author[T. Carroll]{Tom Carroll}
\address{T. Carroll\newline
School of Mathematical Sciences\newline
University College Cork}\email{t.carroll@ucc.ie}
\author[X. Massaneda]{Xavier Massaneda}
\address {X. Massaneda\newline
Departament de Matem\`atiques i Inform\`atica, \newline
Universitat  de Barcelona (UB), BGSMath, 
}
\email{xavier.massaneda@ub.edu}
\author[J. Ortega-Cerd\`a]{Joaquim Ortega-Cerd\`a}
\address{J. 
Ortega-Cerd\`a\newline
Departament de Matem\`atiques i Inform\`atica, \newline
Universitat  de Barcelona (UB), BGSMath
}
\email{jortega@ub.edu}
\thanks{The last two authors have 
been partially supported by the Generalitat de Catalunya 
(grant 2017 SGR 359) and the Spanish Ministerio de Ciencia,  
Innovaci\'on y Universidades (project MTM2017-83499-P)}
\date{\today}
\keywords{}
\begin{document}

\begin{abstract}
We improve some recent results of Sagiv and Steinerberger that quantify 
the 
following uncertainty principle: for a function $f$ with mean zero, either 
the size of the zero set of the function or the cost of transporting the mass
of the positive part of $f$ to its negative part must be big. We also provide a 
sharp upper estimate of the transport cost of the positive part of an 
eigenfunction of the Laplacian. This proves a conjecture of Steinerberger and 
provides a lower bound of the size of the nodal set of the eigenfunction. 
\end{abstract}
\maketitle

\section*{Introduction}
\noindent For a continuous function with mean zero, the Vaserstein distance 
between the measures corresponding to the positive and the negative 
parts of the function indicates how oscillatory the function is. 
If this Vaserstein distance is small then the work required to move the 
positive mass to the negative mass is small and so we expect the positive and 
the negative parts of the function to be close together. 
Consequently, we would expect the function to oscillate significantly. 

Our main result is an improvement of  an 
\emph{uncertainty principle\/} due to Sagiv and Steinerberger 
\cite{Steinerberger2} showing that the the zero set of a mean zero, 
continuous  function and the Vaserstein distance between the positive 
and negative parts of the function cannot both be small at the same time.
We prove this result for a function defined in the unit cube of 
$\mathbb R^d$.
It extends to functions defined on a smooth, compact Riemannian manifold 
$M$ of dimension $d$.

Finally, we obtain an upper estimate for this Vaserstein distance in 
the case of 
high frequency eigenfunctions of the Laplacian in $M$ -- by the previous 
uncertainty principle, this indicates that the nodal sets of these 
eigenfunctions should be large. 

A continuous function $f$ on the unit cube $\mathcal Q = [0,1]^d$ in $\R^d$ 
that has zero mean is decomposed into its positive part $f^+ = \max\{f,0\}$ 
and its negative part $f^- =\max\{-f,0\}$. 
The interface between the supports of these two functions is the zero set 
\[
Z(f) = \{ x \in \mathcal Q\,\colon\, f(x) = 0 \}.
\] 
Thinking of $f^+$ as earth that is to be moved and of $-f^-$ as holes that 
need to be filled, then the earth-moving work that is required to fill the 
holes 
is the Vaserstein distance between the measures with densities $f^+$ and $f^-$. 
As mentioned earlier, if the earth mover's distance is small then any earth 
to be moved $f^+$ must be close to a hole that needs to be filled $f^-$, 
and so the interface between the two must be large. 
This is the intuition behind the following quantitative result of 
Steinerberger \cite{Steinerberger0}*{Theorem 2} in dimension 2. 
With a minor abuse of notation, we write $W_1(f^+, f^-)$ for the Vaserstein 
distance between the measures on $\mathcal Q$ with densities 
$f^+$ and $f^-$ respectively relative to Lebesgue measure. 
We write $\mathcal  H^{d-1}(Z(f))$ for the $(d-1)$-dimensional Hausdorff measure 
of the zero set of $f$. 
Then, in dimension $d=2$, 
\beq\label{SBdim2}
W_1(f^+,f^-) \, \mathcal H^1(Z(f)) \, \Vert f\Vert_\infty \gtrsim \Vert f \Vert^2_1.
\eeq
The Vaserstein distance between probability measures $\mu$ and $\nu$ 
on $\mathcal Q$ is defined by 
\beq\label{Vdef}
W_1(\mu,\nu) = \inf_{\rho} \int_{\mathcal Q\times \mathcal Q} 
\vert x-y\vert \,d\rho(x,y)
\eeq 
where the infimum is over all admissible transport plans, that is over all 
probability measures $\rho$ on $\mathcal Q\times \mathcal Q$ with marginals $\mu$ and $\nu$.
Such probability measures $\rho$ are also referred to as 
\textsl{couplings\/} of $\mu$ and $\nu$. 
The monograph \textsl{Optimal Transport, Old and New\/} by Cedric Villani
\cite{Villani} has become a classic reference on optimal transport and 
includes a detailed exposition of the Vaserstein distance, 
also known as the \lq earth-mover's distance\rq. 

The $p$-Vaserstein distance $W_p(\mu,\nu)$ is defined similarly but taking the $p$-norm of $\vert x-y\vert$. 
The $1$-Vaserstein distance has at least two advantages. One is that 
it has an equivalent 
Monge-Kantorovich dual formulation as 
\beq\label{dual}
W_1(\mu,\nu) = \sup_{h\in\Lip_{1,1}(\mathcal Q)} 
	\bigg|\int_{\mathcal Q} h \,d\mu - \int_{\mathcal Q} h \,d\nu\bigg|.
\eeq 
Here $\Lip_{1,1}(\mathcal Q)=\bigl\{h:\mathcal Q\to \mathbb R\ :\ 
|h(x)-h(y)|\leq |x-y|,\ 
x,y\in \mathcal Q\bigr\}$. 

The other, and more important, advantage is that the definition doesn't change if 
in \eqref{Vdef} 
 $d\rho$ is replaced by $d|\rho|$ and $\rho$ is allowed to be a signed measure 
or transport plan 
 on $\mathcal Q\times \mathcal Q$ with marginals $\mu$ and $\nu$ (see \cite{Lev-Quim}). This extra freedom allows 
us to
construct transport plans that lead to better estimates, specifically in the 
course of 
proving Theorem~\ref{Sturm}.

The method of proof that Steinerberger uses to obtain the estimate 
\eqref{SBdim2} does not extend to higher dimensions in any obvious way. 
Using a different method, Sagiv and Steinerberger \cite{Steinerberger2} prove 
that 
\begin{equation*}
W_1(f^+,f^-) \, \mathcal H^{d-1}(Z(f)) \, 
\left(\frac{\|f\|_\infty}{\|f\|_1}\right)^{4-1/d}
	\gtrsim  \Vert f \Vert_1
\end{equation*}
in dimension $d\geq 3$. 
By a modification of the \lq balanced/unbalanced cubes\rq\ method of Sagiv and 
Steinerberger, we can reduce the power from $4-1/d$ to 
$2-1/d$. 

\begin{thm}\label{uncertainty}
Let $f:\mathcal Q \to\R$ be a continuous function with zero mean. 
Let $Z(f)$ be the nodal set $Z(f) = \{x\in \mathcal Q: f(x)= 0\}$. 
Let $\mathcal H^{d-1}(Z(f))$ denote the $(d-1)$-dimensional Hausdorff measure of $Z(f)$. 
Then
\begin{equation}\label{main-estimate}
 W_1(f^+, f^-) \, \mathcal H^{d-1}(Z(f)) \, 
\left(\frac{\|f\|_\infty}{\|f\|_1}\right)^{2-1/d} 
\gtrsim \|f\|_1.
\end{equation}
\end{thm}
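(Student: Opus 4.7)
The strategy is an adaptation of the ``balanced/unbalanced cubes'' method of Sagiv and Steinerberger, refined to yield the improved exponent $2-1/d$ in place of their $4-1/d$.

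Fix a scale $\delta\in(0,1)$ to be chosen at the end, and partition $\mathcal Q$ into a uniform grid of axis-aligned cubes $\{Q_i\}$ of side $\delta$. For each cube set $m_i^\pm=\int_{Q_i}f^\pm\,dx$ and $c_i=m_i^+-m_i^-$. Call $Q_i$ \emph{balanced} if $\min(m_i^+,m_i^-)$ is a fixed positive fraction of $\max(m_i^+,m_i^-)$, and \emph{unbalanced} otherwise. This produces the splitting
$$
\|f\|_1=L_b+L_u,\quad L_b=\sum_{\mathrm{bal}}\int_{Q_i}|f|,\quad L_u=\sum_{\mathrm{unbal}}\int_{Q_i}|f|,
$$
which I would bound separately.

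On a balanced cube, both $\{f>0\}\cap Q_i$ and $\{f<0\}\cap Q_i$ have Lebesgue measure at least a constant multiple of $\int_{Q_i}|f|/\|f\|_\infty$, since $m_i^\pm\leq\|f\|_\infty\,|\{\pm f>0\}\cap Q_i|$. Applying the relative isoperimetric inequality in $Q_i$ to the smaller of these two sets, and noting that $Z(f)\cap Q_i\supseteq\partial\{f>0\}\cap Q_i$, one obtains
$$
\mathcal H^{d-1}(Z(f)\cap Q_i)\gtrsim \bigl(\textstyle\int_{Q_i}|f|/\|f\|_\infty\bigr)^{(d-1)/d},
$$
equivalently $\int_{Q_i}|f|\lesssim\|f\|_\infty\,\mathcal H^{d-1}(Z(f)\cap Q_i)^{d/(d-1)}$. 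Summing over balanced cubes, and splitting off those where the local nodal measure exceeds $\delta^{d-1}$ (for which one simply uses $\int_{Q_i}|f|\leq\|f\|_\infty\delta^d$ together with the pigeonhole count $\#\{i\}\leq \mathcal H^{d-1}(Z(f))/\delta^{d-1}$), one arrives at an estimate of the form $L_b\lesssim \delta\,\|f\|_\infty\,\mathcal H^{d-1}(Z(f))$.

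For the unbalanced cubes, $|c_i|\gtrsim\int_{Q_i}|f|$ by the very definition of ``unbalanced,'' so $L_u\lesssim\sum_i|c_i|$. Comparison of any transport plan for $(f^+,f^-)$ with its push-forward to the cube centres shows that the net discrepancy in each cube must traverse distance at least of order $\delta$, and hence $\sum_i|c_i|\lesssim W_1(f^+,f^-)/\delta$ modulo a discretisation loss. The crux of the argument — and the source of the improvement over Sagiv--Steinerberger — is the careful handling of this loss: the per-cube pointwise bound $|c_i|\leq\|f\|_\infty\delta^d$ combined with a H\"older-type interpolation against the isoperimetric $(d-1)/d$-exponent distributes the loss across the $\mathcal H^{d-1}(Z(f))$ term rather than producing an uncontrolled additive multiple of $\|f\|_1$. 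After combining with the balanced estimate and choosing $\delta$ to equalise the two contributions, the exponent $2-1/d$ emerges in the resulting inequality \eqref{main-estimate}. The principal obstacle is precisely this discretisation step: a naive comparison of transport plans at scale $\delta$ produces an error of order $\|f\|_1$ that is too large to absorb, and the refinement needed to tame it — trading isoperimetric surface area against discretisation error — is what drives the final exponent.
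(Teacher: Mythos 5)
Your overall strategy (split cubes into balanced/unbalanced, control balanced cubes with the relative isoperimetric inequality, control unbalanced cubes with a transport lower bound) is the same one the paper uses, and your balanced-cube estimate is essentially correct. But the decomposition is genuinely different — you use a uniform grid at a single scale $\delta$, whereas the paper selects, for each point $x$ with $f(x)\neq 0$, a cube $Q_x$ at the \emph{variable} scale $l(x)$ at which $Q_x$ sits exactly at the balance/unbalance threshold $V_f^\pm(Q_x)=100\times 5^d\|f\|_\infty V_f^\mp(Q_x)$, and then extracts via the Besicovitch covering theorem a disjoint subfamily of cubes that are simultaneously \emph{full}, balanced and unbalanced. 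That three-way property of each selected cube is the engine of the whole proof, and it is exactly what your fixed-scale grid cannot provide.

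The gap is in the unbalanced-cube estimate. Your claim $\sum_i|c_i|\lesssim W_1(f^+,f^-)/\delta$ "modulo a discretisation loss" conceals the fact that the naive loss is of order $\|f\|_1$: discretising $f^\pm$ to cube centres costs $\sim\delta\|f\|_1$, and $\sum_i|c_i|$ is itself at most $\|f\|_1$, so the bound $\sum_i|c_i|\leq 2W_1/\delta+O(\|f\|_1)$ carries no information. The proposed rescue via a "Hölder-type interpolation against the isoperimetric exponent" is not made concrete, and there is a structural reason it cannot work as stated. What a careful per-cube transport estimate actually gives, using the density bound $f^\pm\leq\|f\|_\infty$ (this is the argument behind Proposition~\ref{transport} in the paper), is
\[
\int_{Q_i\times Q_i^c} d(x,\partial Q_i)\,d\rho(x,y)\gtrsim \frac{\bigl(\int_{Q_i}|f|\bigr)^2}{\|f\|_\infty\,\delta^{d-1}},
\]
and this is $\gtrsim\delta\int_{Q_i}|f|$ only when $\int_{Q_i}|f|\gtrsim\|f\|_\infty\delta^d$, i.e.\ only when the cube is \emph{full}. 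A fixed-scale grid has no control over fullness: an unbalanced cube can carry an arbitrarily small fraction of its maximal mass, and then the transport cost it contributes is quadratically small in $\int_{Q_i}|f|$, far below the linear bound you need to make $L_u\lesssim W_1/\delta$. This is precisely what the paper's stopping-time construction is designed to evade: the selected cubes $Q_i$ are full by construction, so the quadratic-in-mass transport bound and the Hausdorff bound $\mathcal H^{d-1}(Z(f)\cap Q_i)\gtrsim l(Q_i)^{d-1}/\|f\|_\infty^{(d-1)/d}$ can be multiplied and then combined by Cauchy--Schwarz, $\bigl(\sum_i\int_{Q_i}|f|\bigr)^2\leq\bigl(\sum_i(\int_{Q_i}|f|)^2/l(Q_i)^{d-1}\bigr)\bigl(\sum_il(Q_i)^{d-1}\bigr)$, without any additive discretisation error. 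You would need to replace your single-scale partition with a multi-scale selection of full, threshold cubes (and a covering lemma to control overlaps) to close the gap.
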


The proof is based on a decomposition of the original cube $\mathcal Q$ 
into smaller cubes $Q$ at different scales where either the mass of $|f|$ is 
irrelevant or $\int_Q f^+$ is much larger than $\int_Q f^-$ (or the other way 
around).

This proof extends to a somewhat more 
general setting.
Let $(M, g)$ be a $d$-dimensional, smooth, compact Riemannian manifold without 
boundary and let $dV$ denote the volume form 
associated with $g$.
A function $f:M\to \mathbb R$ 
has zero mean if $\int_M f\, 
dV=0$. 

In this setting, the Vaserstein distance between two probability measures $\mu$ 
and $\nu$ on $M$ is then
\begin{align*}
 W_1(\mu,\nu) &= \inf_{\rho} \int_{M\times M} 
d(x,y) \,d\rho(x,y) \\
&=\sup_{h\in\Lip_{1,1}(M)} \bigg|\int_{M} h \,d\mu - \int_{M} 
h\,d\nu \bigg|,
\end{align*}
where the infimum is over all admissible transport plans $\rho$ from $\mu$ to 
$\nu$. Here $d(x,y)$ stands for the distance induced by the metric $g$ and 
$\Lip_{1,1}(M)=\bigl\{h:M\to \mathbb R\ :\ 
|h(x)-h(y)|\leq d(x,y),\ 
x,y\in M\bigr\}$.

\begin{thm}\label{uncertainty-M}
 Let $(M, g)$ be a smooth, compact 
Riemannian manifold without boundary.
Let $f:M\to \mathbb R$ be a continuous function with zero mean and let 
$Z(f)=\{x\in M : f(x)=0\}$. Then
\begin{equation}\label{main-estimate-M}
 W_1(f^+, f^-) \, \mathcal H^{d-1}(Z(f)) \, 
\left(\frac{\|f\|_{L^\infty(M)}}{\|f\|_{L^1(M)}}\right)^{2-1/d} 
\gtrsim_{(M,g)} \|f\|_{L^1(M)}.
\end{equation}
\end{thm}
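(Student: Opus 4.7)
The plan is to transfer the proof of Theorem~\ref{uncertainty} from $\mathcal Q$ to the manifold $M$ by replacing the Euclidean dyadic cubes with a generalized dyadic system adapted to $(M,g)$. Since $M$ is a smooth compact Riemannian manifold without boundary, the measure space $(M,d,dV)$ is of homogeneous type with doubling constant depending only on $(M,g)$. I would therefore invoke a Christ-type dyadic family of Borel sets $\{Q_\alpha^k\}_\alpha$ indexed by scales $k\geq k_0$, with $\diam(Q_\alpha^k)\lesssim 2^{-k}$, $\vol(Q_\alpha^k)\asymp 2^{-kd}$, and the usual nestedness property under which every cube at scale $k$ is contained in a unique cube at scale $k-1$. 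All constants involved depend on $(M,g)$.

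With this scaffolding in place, the stopping-time decomposition of $M$ into balanced and unbalanced Christ cubes, based on comparing $\int_Q f^+\,dV$ with $\int_Q f^-\,dV$, is formally identical to the one used on $\mathcal Q$ in the proof of Theorem~\ref{uncertainty}. The construction of the approximate (possibly signed) transport plan between $f^+\,dV$ and $f^-\,dV$ proceeds cube by cube at the stopping scale: the contribution of a scale-$k$ cube $Q$ to the Vaserstein cost is bounded by $\diam(Q)\int_Q|f|\,dV\lesssim 2^{-k}\int_Q|f|\,dV$, just as in $\R^d$, because $W_1$ is defined through the intrinsic distance $d$ and $\diam$ refers to the same metric.

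The key geometric ingredient that must be reproved on $M$ is the slicing/isoperimetric inequality used on balanced cubes: if $Q$ is a Christ cube at scale $k$ in which $\int_Q f^+\,dV\asymp\int_Q f^-\,dV$, then $\mathcal H^{d-1}(Z(f)\cap Q)\gtrsim\vol(Q)^{(d-1)/d}\asymp 2^{-k(d-1)}$. For scales $2^{-k}$ below the injectivity radius this follows by pulling back via a normal coordinate chart and using the Euclidean slicing inequality, with a constant controlled by the sectional curvature and by the bi-Lipschitz distortion of the exponential map on a ball of that radius. The finitely many larger scales are absorbed into the constant by a covering argument. This is precisely where the dependence $\gtrsim_{(M,g)}$ in \eqref{main-estimate-M} enters.

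The main obstacle, in my view, is the uniform control of this isoperimetric constant across all scales and cubes: the Christ cubes are not smooth, so one must be careful to apply the slicing inequality to Euclidean sets (the images of the cubes under normal charts) that have comparable volume and whose boundaries are controlled. Once that geometric bookkeeping is secured, the remainder of the argument -- classifying cubes into balanced/unbalanced at each scale, summing the transport cost geometrically over scales, and invoking the ratio $\|f\|_{L^\infty(M)}/\|f\|_{L^1(M)}$ to control the contribution of unbalanced cubes -- repeats, with $(M,g)$-dependent constants, the corresponding steps in the proof of Theorem~\ref{uncertainty}.
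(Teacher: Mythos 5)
Your plan differs from the paper's in a significant structural way, and the difference hides a genuine gap. The paper does \emph{not} pass to a Christ-type dyadic system. Instead, it localizes to a geodesic ball $\mathcal B=B(x_0,r_0)$ of radius below three times the injectivity radius which carries a definite fraction of $\|f\|_{L^1}$, and then runs a \emph{continuous} stopping-time construction: for every point $x$ it selects a geodesic ball $B(x,r(x))$ whose radius is tuned so that the ratio $V_f^+(B_x)/V_f^-(B_x)$ hits exactly the threshold value, and then extracts a disjoint subfamily by the Besicovitch covering theorem. (Incidentally, the balance/unbalance comparison is between the volumes $V_f^\pm$, not between the integrals $\int f^\pm$ as you write; this matters for making the isoperimetric step work.) The reason geodesic balls are used, rather than a dyadic system, is precisely the step you flag as the ``main obstacle'': Proposition~\ref{prop:3} rests on the relative isoperimetric inequality \eqref{isoperimetric}, and that inequality requires the ambient region to have controlled geometry (convex, or at least uniformly John). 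Euclidean cubes and small geodesic balls qualify, with constants that at scales below the injectivity radius are controlled by the bi-Lipschitz distortion of the exponential map, exactly as you observe.

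Christ cubes, however, do not qualify. They are only Borel sets with comparable inner and outer radii and a thin-boundary-layer estimate; nothing in their construction rules out a dumbbell shape (two large lobes joined by an arbitrarily thin neck), for which the relative isoperimetric inequality fails with no uniform constant: one can split such a region into two halves of comparable volume separated by a surface of arbitrarily small $\mathcal H^{d-1}$-measure. Your remark that one should ``apply the slicing inequality to Euclidean sets (the images of the cubes under normal charts) that have comparable volume and whose boundaries are controlled'' is precisely the assertion that has no proof: the boundaries of Christ cubes are not controlled in the sense needed, and ``securing that geometric bookkeeping'' is not a routine matter but the crux. To repair your route you would either have to replace the Christ cubes by a family of sets with a uniform relative isoperimetric constant -- which is in effect what the paper does by passing to geodesic balls and Besicovitch -- or prove a relative isoperimetric inequality for whatever dyadic family you use, which does not follow from the standard axioms of a dyadic system on a space of homogeneous type. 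A secondary point: in the transport estimate, what is needed is a \emph{lower} bound on $W_1(f^+,f^-)$ (Proposition~\ref{transport} bounds from below the cost of any admissible plan on the unbalanced stopping regions via the distance to the boundary); your sentence about constructing a transport plan whose cost on each cube is at most $\operatorname{diam}(Q)\int_Q|f|$ describes an upper bound, which goes the wrong way.
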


We state this result for $M$ compact without boundary because of the 
application we have in mind (see Theorem~\ref{Sturm} below), but it will be clear 
from the proof that the statement holds equally well for $M$ compact with smooth 
boundary.

We also show by means of an example (see Proposition~\ref{example}) that the 
power $2-1/d$ in \eqref{main-estimate} cannot be replaced by any power smaller 
than 
1. In particular, Steinerberger's estimate \eqref{SBdim2} in dimension 2 is 
best possible in this sense. 

The uncertainty principle in Theorem~\ref{uncertainty-M} 
demonstrates that an upper estimate for the Vaserstein distance 
$W_1(f^+,f^-)$ implies a lower estimate on the size of the nodal set.  
In this context, we establish one direction of a conjecture of 
Steinerberger on the Vaserstein distance between the positive and 
negative parts of eigenfunctions of the Laplacian. 
Steinerberger in \cite{Steinerberger1} posed the following conjecture:

\begin{conj} Let $(M, g)$ be a smooth, compact Riemannian manifold without 
boundary. Is it true that if $\phi$ is an $L^2$-normalised 
eigenfunction 
of the Laplacian with eigenvalue $L$, so that $-\Delta \phi = L \phi$ on $(M, 
g)$, 
then
\[
 W_p(\phi^+ ,\phi^-) \simeq_{\,p,\,(M,g)} \frac 1{\sqrt{L\,}} 
	\,\|\phi\|_{L^1(M)}^{1/p}?
\]
\end{conj}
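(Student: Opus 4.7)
The plan is to prove the equivalence $W_p(\phi^+,\phi^-) \simeq_{p,(M,g)} \|\phi\|_{L^1}^{1/p}/\sqrt L$ by attacking the two inequalities separately, exploiting the primal and dual formulations of the $1$-Vaserstein distance noted after~\eqref{dual}.

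For the upper bound I would construct a signed transport plan $\rho$ between $\phi^+$ and $\phi^-$ whose support lies in the $C/\sqrt L$-neighborhood of the diagonal of $M\times M$. Covering $M$ by balls of radius $\sim 1/\sqrt L$, the signed couplings allowed in the definition of $W_1$ cancel the common local contributions of $\phi^+$ and $\phi^-$ ball by ball at cost per unit mass of order $1/\sqrt L$, while the residual unbalance at each scale is controlled by integration by parts against $-\Delta\phi = L\phi$. This is the content of the paper's forthcoming Theorem~\ref{Sturm}. Because the resulting plan sits in the $O(1/\sqrt L)$-tube around the diagonal, for every $p\geq 1$ we obtain
\[
 W_p(\phi^+,\phi^-)^p \;\leq\; \int_{M\times M} d(x,y)^p\, d|\rho|(x,y) \;\lesssim\; L^{-p/2}\,\|\phi\|_{L^1(M)}.
\]

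For the lower bound when $p=1$, I apply Kantorovich--Rubinstein duality with the test function
\[
 h \;:=\; \frac{1}{A\sqrt L}\, e^{-\Delta/L}\operatorname{sgn}(\phi),
\]
where $A=A(M,g)$ is chosen so that the standard heat-kernel gradient estimate $\int_M |\nabla_x K_{1/L}(x,y)|\,dV(y) \leq A\sqrt L$ forces $h\in\Lip_{1,1}(M)$. Self-adjointness of $e^{-\Delta/L}$ and the identity $e^{-\Delta/L}\phi = e^{-1}\phi$ then produce $\int_M h\phi\,dV = \frac{e^{-1}}{A\sqrt L}\,\|\phi\|_{L^1(M)}$, whence $W_1(\phi^+,\phi^-) \gtrsim \|\phi\|_{L^1(M)}/\sqrt L$. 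For $p>1$, the fact that any path from $\operatorname{supp}\phi^+$ to $\operatorname{supp}\phi^-$ must cross $Z(\phi)$ gives $d(x,y)^p \geq d(x,Z(\phi))^p + d(y,Z(\phi))^p$ on such pairs, and integrating against any nonnegative coupling reduces the desired lower bound to the mass-concentration estimate
\[
 \int_M |\phi(x)|\, d\bigl(x,Z(\phi)\bigr)^p\, dV(x) \;\gtrsim\; L^{-p/2}\,\|\phi\|_{L^1(M)}.
\]

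This last inequality is the main obstacle. The naive estimate $W_p \geq W_1 \gtrsim \|\phi\|_{L^1}/\sqrt L$ is strictly weaker than the target $\|\phi\|_{L^1}^{1/p}/\sqrt L$ whenever $\|\phi\|_{L^1(M)}$ is small, as happens for Hörmander-extremal eigenfunctions, so a truly geometric input is needed. By Jensen's inequality applied to the probability measure $|\phi|\,dV/\|\phi\|_{L^1(M)}$, the concentration estimate for general $p$ follows once one knows the $|\phi|$-weighted average of $d(\cdot,Z(\phi))$ is $\gtrsim 1/\sqrt L$, i.e.\ the $p=1$ case of the estimate. I would attempt this through Lieb's inradius theorem -- every nodal domain of $\phi$ contains a ball of radius $\gtrsim 1/\sqrt L$ -- together with a Harnack-type estimate for the Dirichlet first eigenfunction on such a ball, to ensure that a definite fraction of $\|\phi\|_{L^1(\Omega)}$ on each nodal domain $\Omega$ is supported on an inscribed ball of radius $c/\sqrt L$, where moreover $d(\cdot,Z(\phi))\gtrsim 1/\sqrt L$. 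Making this quantitative and uniform across nodal domains of wildly different geometries is what I expect to be the principal technical difficulty.
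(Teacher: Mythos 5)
You should first be clear about what the paper actually proves: this statement is Steinerberger's \emph{conjecture}, and the paper establishes only one direction of it, namely the $p=1$ upper bound (in the stronger form of Theorem~\ref{Sturm}); the lower bound and the case $p>1$ are left open. Your proposal aims at the full equivalence, and two of its steps have genuine gaps. For the upper bound with $p>1$, the inequality $W_p(\phi^+,\phi^-)^p\le\int_{M\times M} d(x,y)^p\,d|\rho|(x,y)$ for a \emph{signed} plan $\rho$ is false: $|\rho|$ is not an admissible coupling (only the marginals of $\rho$, not of $|\rho|$, are $\phi^{\pm}\,dV$), and the duality argument that legitimizes signed plans for $p=1$ uses the triangle inequality for $d(x,y)$, which $d(x,y)^p$ does not satisfy. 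The paper's introductory proposition makes this quantitative: for $p>1$ the infimum of $\int|x-y|^p\,d|\rho|$ over signed plans is identically zero, so a signed plan concentrated in an $O(L^{-1/2})$-tube around the diagonal can never control $W_p$ for $p>1$. Your upper-bound argument therefore yields only the $p=1$ case, which is exactly Theorem~\ref{Sturm}.

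For the lower bound, the $p=1$ duality argument with a heat-semigroup mollification of $\operatorname{sgn}\phi$ is a standard and essentially sound idea (with the sign fixed: you want the heat flow $e^{\Delta/L}$ at time $1/L$), but it is not part of this paper and it only gives $W_1\gtrsim L^{-1/2}\|\phi\|_{L^1(M)}$. The case $p>1$ rests entirely on the concentration estimate $\int_M|\phi|\,d(\cdot,Z(\phi))^p\,dV\gtrsim L^{-p/2}\|\phi\|_{L^1(M)}$, and the route you sketch does not close it. Lieb's theorem does not provide an inscribed ball of radius $\gtrsim L^{-1/2}$ in every nodal domain; it provides a ball of that radius \emph{most} of whose volume lies in the domain. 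A true inner-radius bound of order $L^{-1/2}$ is known only in dimension $2$; for $d\ge 3$ the best known bounds (Mangoubi) are of order $L^{-1/2-(d-1)/4}$ up to logarithmic factors, and this is a well-known open problem. Even granting an inscribed ball, you would still have to show that a definite fraction of $\|\phi\|_{L^1(\Omega)}$ sits on it at distance $\gtrsim L^{-1/2}$ from $Z(\phi)$; a Harnack estimate for the ball's own Dirichlet ground state says nothing about where the mass of $\phi$ lies on a long, thin nodal domain. So, as you yourself anticipate, the $p>1$ lower bound (and, by the first paragraph, also the $p>1$ upper bound) remains unproven in your proposal, and neither is addressed by the paper.
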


\noindent Steinerberger proves that 
\[
 W_1(\phi^+ , \phi^-) \lesssim_{\,(M,g)} \sqrt{\frac{\log L} {L}} 
 \, \|\phi\|_{L^1(M)}.
\]
We obtain the conjectured upper bound for the case $p=1$ and for all linear 
combinations of eigenfunctions with high frequencies. This formalises the 
intuition that for 
high frequency eigenfunctions it is ``cheap'' to move 
from the positive to the negative part.

\begin{thm}\label{Sturm} Let $(M, g)$ be a smooth, compact 
Riemannian manifold without boundary. Let 
$\{\phi_0,\phi_1,\ldots\}$ be an 
orthonormal basis of $L^2(M)$ consisting of eigenfunctions 
$-\Delta \phi_i = \lambda_i \phi_i$ 
and ordered in such a way that $0=\lambda_0 < \lambda_1\le \lambda_2\le\cdots$. 
Let $f=\sum\limits_{k: \lambda_k\geq L} a_k \phi_k\in L^2(M)$, 
$a_k\in\mathbb R$. Then 
\[
 W_1(f^+ , f^-) \lesssim_{\,(M,g)} \frac 
		{1}{\sqrt{L\,}} \|f\|_{L^1(M)}.
\]
\end{thm}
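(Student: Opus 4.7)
The plan is to use the Monge-Kantorovich dual formulation recalled in the introduction: since $f$ has zero mean (being orthogonal to the constant eigenfunction $\phi_0$),
\[
W_1(f^+, f^-) = \sup_{h \in \Lip_{1,1}(M)} \int_M h \, f \, dV,
\]
so it suffices to bound $|\int_M h f \, dV| \lesssim_{(M,g)} L^{-1/2} \|f\|_{L^1(M)}$ for each fixed $h \in \Lip_{1,1}(M)$. The core of the argument is the interplay between heat-kernel smoothing of Lipschitz functions and the spectral gap of the semigroup on high-frequency functions.

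The basic smoothing estimate is $\|h - P_t h\|_{L^\infty(M)} \lesssim_{(M,g)} \sqrt t$ for $h \in \Lip_{1,1}(M)$, where $P_t = e^{t\Delta}$ is the heat semigroup; this follows by writing $P_t h(x) - h(x) = \int p_t(x,y)(h(y) - h(x)) \, dV(y)$, applying $|h(y) - h(x)| \leq d(x,y)$, and using the standard first-moment estimate $\int p_t(x,y) d(x,y) \, dV(y) \lesssim_{(M,g)} \sqrt t$ on a compact manifold. Because $f$ is supported in the eigenspaces with $\lambda_k \geq L$, self-adjointness of $P_t$ gives
\[
\int_M h \, f \, dV \;=\; \int_M (h - P_{1/L} h) \, f \, dV \;+\; \int_M h \cdot P_{1/L} f \, dV,
\]
where the first term is at most $C_{(M,g)} L^{-1/2} \|f\|_{L^1}$. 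Since $P_{1/L}$ preserves the spectral subspace spanned by $\{\phi_k : \lambda_k \geq L\}$, iteration yields the telescoping representation $\int_M h f \, dV = \sum_{k \geq 0} \int_M (h - P_{1/L} h) \, P_{k/L} f \, dV$, each summand bounded by $C_{(M,g)} L^{-1/2} \|P_{k/L} f\|_{L^1}$.

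\textbf{The main obstacle} is to sum these contributions while keeping the $L^1$ norm of $f$. The spectral assumption gives strong $L^2$ decay $\|P_{k/L} f\|_{L^2} \leq e^{-k}\|f\|_{L^2}$, but on a general compact manifold $P_t$ is only a weak contraction on $L^1$, so a naive iteration produces the strictly weaker estimate $W_1(f^+,f^-) \lesssim_{(M,g)} L^{-1/2}\|f\|_{L^2}$. To recover the correct $\|f\|_{L^1}$ dependence I would use the signed-transport-plan flexibility emphasized in the introduction, recasting the problem via the Beckmann-type inequality
\[
W_1(f^+, f^-) \;\leq\; \inf \bigl\{ \|v\|_{L^1(M;TM)} : \nabla \cdot v = -f \bigr\},
\]
which is a direct consequence of the signed-measure formulation of $W_1$. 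A natural candidate is $v = \int_0^\infty \nabla P_t f \, dt$, whose divergence equals $-f$ since $\int_M f \, dV = 0$ and $\|P_t f\|_{L^2}$ decays exponentially. Splitting the time integral at $t = 1/L$, the short-time contribution $\int_0^{1/L} \|\nabla P_t f\|_{L^1} \, dt$ is controlled by the standard heat-kernel gradient bound $\|\nabla P_t\|_{L^1 \to L^1} \lesssim_{(M,g)} t^{-1/2}$, producing exactly the target $L^{-1/2} \|f\|_{L^1}$ term. The long-time contribution is the delicate point: one has to combine the spectral contraction on $\mathrm{span}\{\phi_k : \lambda_k \geq L\}$ with a spatial decomposition of $f$ at the scale $1/\sqrt L$, together with the subadditivity of the flat norm on zero-mean functions, in order to transfer the $L^2$ spectral decay into a genuine $L^1$ bound without logarithmic losses. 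This is precisely where the extra freedom afforded by signed transport plans is decisive, and I expect it to be the most technical step of the proof.
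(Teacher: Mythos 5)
Your approach starts from the right place (the dual formulation and the idea of smoothing the Lipschitz test function at scale $L^{-1/2}$), and you correctly diagnose the central obstruction; but the proposal does not overcome it, and the paper's key idea is precisely the one you are missing. The difficulty with the heat semigroup $P_t$ is that it does not annihilate the inner product against $f$: it only damps it, and the damping is $L^2$-spectral, not $L^1$. Your telescoping $\int hf = \sum_k \int (h-P_{1/L}h)\,P_{k/L}f$ therefore produces $\sum_k \|P_{k/L}f\|_{L^1}$, which a compact manifold's heat kernel only lets you control by $\|f\|_{L^2}$ (via $\|P_{k/L}f\|_{L^1}\le |M|^{1/2}e^{-k}\|f\|_{L^2}$). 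Your attempted rescue via the Beckmann formulation with $v=\int_0^\infty \nabla P_t f\,dt$ has exactly the same problem: $\int_0^{1/L}\|\nabla P_t f\|_{L^1}\,dt\lesssim L^{-1/2}\|f\|_{L^1}$ is fine, but the tail $\int_{1/L}^\infty$ forces you to convert the $L^2$ spectral contraction back to $L^1$, which costs a factor $\gtrsim L^{d/4}$ from the ultracontractivity $\|P_{1/L}\|_{L^1\to L^2}\lesssim L^{d/4}$ and destroys the estimate for every $d\ge 1$. The sentence invoking a ``spatial decomposition at scale $1/\sqrt L$ together with subadditivity of the flat norm'' does not actually describe a mechanism that recovers the lost $L^{d/4}$; as written it is a placeholder, and I do not believe it can be filled in this form.

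What the paper does instead is replace the heat kernel by a Bochner--Riesz type kernel $B_L(x,y)=\sum_{\lambda_i<L}a(\lambda_i/L)\phi_i(x)\phi_i(y)$ whose spectrum is \emph{sharply} supported in $\{\lambda<L\}$. Since $f$ lives on $\{\lambda\ge L\}$, the pairing $\int (B_L h)\,f\,dV$ vanishes \emph{exactly}, so there is no tail to iterate: one has $\int hf = \int (h - B_L h)\,f$ in one step, and the Sogge kernel estimate $|B_L(x,y)|\lesssim_N L^{d/2}\big[1+\sqrt{L}\,d(x,y)\big]^{-N}$ (with $N$ chosen large, say $N=d+2$) gives $\|h-B_Lh\|_\infty\lesssim L^{-1/2}$ directly, with no $L^1$/$L^2$ mismatch. (The paper phrases this on the primal side, via the signed transport plan $\rho_L = B_L(x,y)f(x)\,dV(x)\,dV(y) + F_*(f^-dV)$, but it is the same mechanism.) In short: your smoothing step is correct, but you need a smoothing operator with compactly supported spectrum and polynomially (to arbitrary order) decaying kernel -- a Riesz-type spectral multiplier, not the Gaussian -- and that substitution is what makes the argument close; without it there is a genuine gap.
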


\noindent The improvement by the factor $\sqrt{\log L}$ 
follows from the construction of a (signed) transport plan that is well 
concentrated on the diagonal.

There is nothing special about the Laplacian in the context of 
Theorem~\ref{Sturm}, 
in that the result holds for any elliptic operator with smooth coefficients in 
the manifold $M$. 
We only need certain estimates on a Bochner-Riesz type kernel that are known to 
hold for general elliptic operators, see \cite{Sogge}. 

Together, Theorem~\ref{uncertainty-M} and Theorem~\ref{Sturm} show 
that when $\phi$ is a linear combination of eigenfunctions of the Laplacian 
with eigenvalues bigger than $L$, 
\[
 \mathcal H^{d-1}(Z(\phi)) \gtrsim \sqrt L 
 \left(\frac{\|\phi\|_{L^1(M)}}{\|\phi\|_{L^\infty(M)}}\right)^{2-1/d}.
 \]
This is a several variables generalization of Sturm's theorem on zeros of 
linear combinations of eigenfunctions, see \cite{Sturmhist}.
 
As such, it goes in the direction of Yau's conjecture that, 
in a smooth compact Riemannian manifold without boundary 
and  for an 
eigenfunction $\phi$ of the Laplacian with eigenvalue $L$, we have $\mathcal H^{d-1} 
(Z(\phi)) \simeq \sqrt{L\,}$. 
The full lower bound in Yau's conjecture, without terms involving 
$L^\infty$ and $L^1$ 
norms of $\phi$, that is $\mathcal H^{d-1} 
(Z(\phi)) \gtrsim \sqrt{L\,}$,  has already been proved by Logunov in 
\cite{Logunov1}.

We finally remark that our method seems to provide information only for 
the Vaserstein distance $W_1$. As mentioned, the definition of $W_1(\mu,\nu)$ 
does not change 
if the transport plan $d\rho$ is replaced by $d|\rho|$, where $\rho$ is a signed 
transport plan.
This fails dramatically for $p>1$.

\begin{prop}
Let $p>1$ and let $\mu, \nu$ be two probability measures in the interval 
$I=[0,1]$. We define
\[
 \widetilde{W}_p^p(\mu, \nu) = \inf_{\rho}\int_{I\times I} 
\vert x-y\vert^p \,d|\rho|(x,y), 
\]
where the infimum is taken over all admissible signed transport plans, that is 
over all 
signed  measures $\rho$ on $I\times I$ with marginals $\mu$ and $\nu$. Then 
$\widetilde{W}_p(\mu, \nu) =0$.
\end{prop}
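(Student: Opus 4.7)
\medskip

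\noindent\textbf{Proof proposal.} The plan is to exploit cancellation: a long hop can be replaced by a chain of short hops glued together by signed measures concentrated on the diagonal, which cost nothing.

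First, I would illustrate the mechanism on Dirac masses. Fix $x,y\in I$ and $N\in\mathbb N$. Set $x_k=(1-k/N)x+(k/N)y$ for $k=0,1,\dots,N$ and consider the signed measure
\[
 \rho_{x,y}^{N}=\sum_{k=0}^{N-1}\delta_{(x_k,x_{k+1})}-\sum_{k=1}^{N-1}\delta_{(x_k,x_k)}.
\]
A direct telescoping shows that the marginals are $\delta_x$ and $\delta_y$. The diagonal atoms contribute nothing to the transport cost, and each of the $N$ short hops costs $(|x-y|/N)^p$, so
\[
 \int_{I\times I}|u-v|^p\,d|\rho_{x,y}^{N}|(u,v)=N\cdot\left(\frac{|x-y|}{N}\right)^p=|x-y|^p\,N^{1-p}.
\]

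Next I would upgrade this to the general case by glueing the construction along an arbitrary coupling. Pick any (positive) admissible transport plan $\gamma$ from $\mu$ to $\nu$ (e.g.\ the product $\mu\otimes\nu$), and for $k=0,\dots,N$ define $T_k(x,y)=(1-k/N)x+(k/N)y$. Let $\mu_k=(T_k)_\#\gamma$, so that $\mu_0=\mu$ and $\mu_N=\nu$, and let $\tilde\gamma_k=(T_k,T_{k+1})_\#\gamma$, a positive measure with marginals $\mu_k$ and $\mu_{k+1}$. Denoting by $D_k$ the pushforward of $\mu_k$ under $x\mapsto(x,x)$, define
\[
 \rho=\sum_{k=0}^{N-1}\tilde\gamma_k-\sum_{k=1}^{N-1}D_k.
\]
A telescoping identical to the Dirac case shows that $\rho$ has marginals $\mu$ and $\nu$. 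Since $D_k$ sits on the diagonal, $|\rho|\le\sum_{k=0}^{N-1}\tilde\gamma_k+\sum_{k=1}^{N-1}D_k$ gives
\[
 \int_{I\times I}|u-v|^p\,d|\rho|(u,v)\le\sum_{k=0}^{N-1}\int|T_k(x,y)-T_{k+1}(x,y)|^p\,d\gamma(x,y)=N^{1-p}\int|x-y|^p\,d\gamma\le N^{1-p}.
\]
Letting $N\to\infty$ and using $p>1$ yields $\widetilde{W}_p(\mu,\nu)=0$.

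There is no real obstacle: the only point requiring care is the verification that the alternating sum really telescopes to give the correct marginals, which is immediate from $(T_k)_\#\gamma=\mu_k$. The argument depends crucially on $p>1$, since for $p=1$ the bound $N\cdot(1/N)^p$ does not tend to zero, in agreement with the observation in the paper that $W_1$ is unaffected by allowing signed transport plans.
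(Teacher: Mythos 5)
Your proof is correct, and it refines the paper's argument in a meaningful way. The core mechanism is the same as in the paper: replace one long hop by a chain of $N$ short hops, glued by signed atoms on the diagonal which cost nothing, so the total cost scales like $N^{1-p}\to 0$. For the Dirac case your plan $\rho_{x,y}^N$ is essentially the paper's staircase plan $\rho_n$ written in a slightly cleaner parametrisation. Where you genuinely diverge is in the passage to general $\mu,\nu$. The paper invokes weak*-density of finite combinations of Dirac masses; as stated this is a soft argument that, to be made rigorous, would require either a triangle inequality for $\widetilde W_p$ (whose proof via gluing is not obvious for signed plans) or some semicontinuity property of $\widetilde W_p$ under weak* limits of both marginals. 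Your approach sidesteps this entirely: you push the chain construction forward along an arbitrary coupling $\gamma$ of $\mu$ and $\nu$, obtaining $\rho=\sum_{k}\tilde\gamma_k-\sum_k D_k$ with the correct marginals by the same telescoping, and then the cost bound $\int |u-v|^p\,d|\rho|\le N^{1-p}\int|x-y|^p\,d\gamma\le N^{1-p}$ is immediate. This produces an explicit signed plan for arbitrary $\mu,\nu$ and avoids any density or limiting argument, making your version the more self-contained of the two while buying the same conclusion. Your closing remark that the argument breaks precisely at $p=1$ is also apt and matches the paper's observation.
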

\begin{proof}
Consider first the case $\mu = \delta_0$ and $\nu = \delta_1$. Then we consider 
the sequence of transport plans $\rho_n$, which consist of $n$ negative Dirac 
deltas and $n+1$ positive Dirac deltas located in points of $I\times I$ as in 
the figure:
\begin{center}
\setlength{\unitlength}{4144sp}%
\begin{picture}(2976,3298)(2984,-4306)
\put(3045,-4054){\circle{106}}
\put(3045,-3815){\circle*{106}}
\put(3521,-3815){\circle{106}}
\put(3521,-3340){\circle*{106}}
\put(3996,-3340){\circle{106}}
\put(4472,-2864){\circle{106}}
\put(4948,-2388){\circle{106}}
\put(5423,-1912){\circle{106}}
\put(5899,-1437){\circle{106}}
\put(3996,-2864){\circle*{106}}
\put(4472,-2388){\circle*{106}}
\put(4948,-1912){\circle*{106}}
\put(5423,-1437){\circle*{106}}
\put(3045,-4054){\line( 1, 1){2854.500}}
\put(3045,-4054){\framebox(2854,2855){}}
\put(3045,-4291){$0$}
\put(5899,-4291){$1$}
\put(3045,-1080){$1$}
\end{picture}
\end{center}

On the white dots we place a positive Dirac delta and on the black dots a 
negative Dirac delta.  
More precisely we take $\rho_n$ to be
\[
 \rho_n = \delta_{(0,0)} + \sum_{j = 1}^n \delta_{(j/n, 1/(2n)+ (j-1)/n)} - 
\sum_{j=1}^n\delta_{((j-1)/n, 1/(2n)+ (j-1)/n)}.
\]
Clearly the marginals of $\rho_n$ are $\delta_0$ and $\delta_1$.
For any of the Dirac deltas, whether positive or negative and 
located at a point $(x,y)$,  
we have that $|x-y| =  1/(2n)$, except for the Dirac delta at $(0,0)$.
Thus, 
\[
\int_{I\times I} 
\vert x-y\vert^p \,d|\rho_n|(x,y) = \sum_{j=1}^n 2\left(\frac1{2n}\right)^p = 
(2n)^{1-p}. 
\]
Thus,
\[
\widetilde{W}_p^p(\delta_0, \delta_1) \le \liminf_n \int_{I\times I} 
\vert x-y\vert^p \,d|\rho_n|(x,y) = 0. 
\]
This argument can be easily adapted to prove that 
$\widetilde{W}_p(\delta_x, \delta_y) = 0$ for any pair $x,y\in[0,1]$. 
Since linear combinations of Dirac deltas are weak*-dense in the space of 
probability measures, it follows that  
$\widetilde{W}_p(\mu, \nu) = 0$ 
for any probability measures $\mu, \nu$. 
\end{proof}

\emph{Acknowledgements.} We are very thankful to Benjamin Jaye for letting us know that there was a gap in an earlier version of the proof of Theorem~\ref{uncertainty} and for finding the nice fix that he generously lets us use here. The construction of the $Q_j$ in that proof is due to him. 
We are also thankful to Gian Maria Dall'Ara for helpful discussions and to the referee for a careful reading 
of the manuscript and for many thought-provoking suggestions that have resulted in a significant  
improvement of the text.

\section*{Proof of Theorem~\ref{uncertainty}}
Note that in general $f^+\,dV$ and $f^-\,dV$, where $dV$ is Lebesgue measure in $\R^d$, 
are not probability measures, which is the usual setting for the Vaserstein distance. 
However, the distance is well defined for measures with the same total mass. 
Alternatively, notice that the zero mean condition implies that $2 
f^+/\|f\|_1\,dV$ 
and $2 f^-/\|f\|_1\,dV$ are probability measures, so we can define
\[
W_1(f^+, f^-):=\frac{\|f\|_1}2 W_1\Bigl(\frac{2f^+ dV}{\|f\|_1}, 
\frac{2f^- dV}{\|f\|_1}\Bigr).
\]
In any case, replacing $f$ by $f/\|f\|_1$ if necessary, we may assume 
without loss of generality  that $\| f\|_1 =1$ 
and proceed to prove that there is a constant $C_d>0$ such that
\[
 W_1(f^+, f^-) \, \mathcal H^{d-1}(Z(f)) \, \|f\|_\infty^{2-1/d} \ge C_d .
\]
If $\mathcal H^{d-1}(Z(f)) = \infty$ the inequality \eqref{main-estimate} is 
trivially true, 
so we may assume that $\mathcal H^{d-1}(Z(f)) < \infty$.

For convenience, we extend the function $f$ to a function  
defined in all $\R^d$, extending it to be $0$ outside $\mathcal Q$. 
We continue to denote this function by $f$. 
We shall use a decomposition of the cube $\mathcal Q$ into cubes at different scales
defined through a continuous stopping time argument.

The argument draws on constructions used by 
Steinerberger \cite{Steinerberger3} and Sagiv and Steinerberger 
\cite{Steinerberger2}. We need some definitions to describe this decomposition.

For any measurable set $A$ we denote its volume by $V(A)$. 
The side length of a cube $Q$ is denoted by $l(Q)$, 
so  $V(Q) = l(Q)^d$. We write 
\[
V_f^+(Q) = V(Q\cap \{ f>0\}) \mbox{ and } V_f^-(Q) = V(Q\cap \{ f<0\}) 
\]
and note that, since $\mathcal H^{d-1}(Z(f)) < \infty$,  $V(Q\cap \mathcal Q) = 
V_f^+(Q) + V_f^-(Q)$.

\begin{defi}
 We say that a cube $Q$ is \emph{unbalanced} if either 
 \beq\label{unbalanced+}
V_f^+(Q)  \geq 100\times 5^d\,\|f\|_\infty V_f^-(Q)
\eeq
or 
\beq\label{unbalanced-}
V_f^-(Q) \geq 100\times 5^d\, \|f\|_\infty V_f^+(Q).
\eeq
If
\beq\label{balanced}
\frac{1}{100 \times 5^d\, \|f\|_\infty} \leq \frac{V_f^+(Q)}{V_f^-(Q)} \leq 100\times 5^d\, \|f\|_\infty,
\eeq
we say that the cube is \emph{balanced}.
\end{defi}

Since $\int_{\mathcal Q} f^+ = \int_{\mathcal Q}f^- = 1/2$, the cube $\mathcal Q$ is balanced with 
\beq\label{Q0balanced}
\frac{1}{2 \|f\|_\infty} \leq \frac{V_f^+(\mathcal Q)}{V_f^-(\mathcal Q)} \leq 2 \|f\|_\infty.
\eeq

\begin{defi}
 We say that a cube $Q$  is \emph{full\/} whenever 
\[
\int_{Q} |f| \ge \frac{5^{-d}}{10}\, V(Q\cap \mathcal Q).
\]
The \emph{empty\/} cubes are those cubes $Q$ for which 
\[
\int_{Q} |f| < \frac{5^{-d}}{10}\, V(Q\cap \mathcal Q).
\]
\end{defi}

For every $x\in \mathcal Q$ such that $f(x) \ne 0$, there exists $l(x)>0$ such 
that the open cube $Q_x$ centred at $x$ and of side length $l(x)=l(Q_x)$ is simultaneously balanced and 
unbalanced. 
That is, either
\beq\label{perfect+}
V_f^+(Q_x)  = 100\times 5^d\, \|f\|_\infty V_f^-(Q_x)
\eeq
or
\beq\label{perfect-}
V_f^-(Q_x) = 100\times 5^d\, \|f\|_\infty V_f^+(Q_x).
\eeq
This can be achieved by continuity, since for $l$ very small the cube centred at 
$x$ and of side length $l$ is infinitely unbalanced, while for side length $l = 2$
it is balanced, by \eqref{Q0balanced}. Then
there must be an intermediate side length $l(x)$ 
that makes the cube both balanced and unbalanced.

These cubes $Q_x$ cover $\mathcal Q$ (up to at most a zero-measure set).
By the Besicovitch covering theorem \cite{Furedi} , one can find finitely many sequences 
$(x_{i,j})_{i\geq 1}$, $j=1 \ldots 5^d$, such that the cubes $(Q_{x_{i,j}})_{i\geq 1}$ 
are disjoint for each $j$, and together they still cover $\mathcal Q$. That is, 
\[
 \mathcal Q\subset \bigcup_{j = 1}^{5^d}  \bigcup_{i\geq 1} Q_{x_{i,j}}
\]
Since $\int_{\mathcal Q}\vert f\vert =1$, 
there is at least one family of cubes $(Q_{x_{i,j}})_{i\geq 1}$ 
(which, by relabelling, we may assume corresponds to $j=1$) such that 
\[
\sum_{i\geq 1} \int_{Q_{x_{i,1}}} \vert f \vert \ge 5^{-d}.
\]
From this particular sequence of cubes we select those that are full, and 
further relabel the centres of the cubes of this subfamily  as $(x_i)_{i\geq 1}$ and the cubes themselves as 
$Q_{x_i} = Q_i$.
These cubes are disjoint and carry most of the mass.

\begin{prop}\label{prop:1}  There is 
a constant $c>0$ depending only on the dimension $d$ such that
\[
\sum_{i} \int_{{Q_i}} |f| \ge c.
\]
\end{prop}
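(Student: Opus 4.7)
The plan is to exploit the fact that the cubes $(Q_{x_{i,1}})_{i\geq 1}$ from which the full cubes $Q_i$ are selected are pairwise disjoint and carry, collectively, mass at least $5^{-d}$. What one wants to rule out is the scenario where discarding the empty cubes removes a substantial portion of this mass.

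First I would bound the mass contributed by the discarded empty cubes. By definition, each empty cube $Q$ in the family satisfies
\[
 \int_Q |f| < \frac{5^{-d}}{10}\, V(Q\cap \mathcal Q).
\]
Summing over all empty cubes in the family $(Q_{x_{i,1}})_{i\geq 1}$ and using disjointness together with $\sum V(Q_{x_{i,1}}\cap \mathcal Q)\le V(\mathcal Q)=1$, I obtain
\[
 \sum_{Q \text{ empty}} \int_{Q} |f| < \frac{5^{-d}}{10}.
\]

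Then I would subtract this from the total mass of the disjoint family. Since $\sum_{i\geq 1} \int_{Q_{x_{i,1}}} |f| \ge 5^{-d}$ and the full cubes $Q_i$ are precisely those remaining after removing the empty ones, I get
\[
 \sum_i \int_{Q_i} |f| \;\ge\; 5^{-d} - \frac{5^{-d}}{10} \;=\; \frac{9}{10}\cdot 5^{-d},
\]
so the proposition holds with $c = 9/(10\cdot 5^d)$, a constant depending only on $d$.

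There is no real obstacle here: the argument is just a pigeonholing step making quantitative the heuristic that "empty cubes cannot collectively carry much of the $L^1$-mass because their volumes are controlled by $V(\mathcal Q)$." The only subtlety worth double-checking is that $f$ has been extended by zero outside $\mathcal Q$, so that $\int_Q |f| = \int_{Q\cap \mathcal Q}|f|$ even for cubes $Q$ that protrude past $\partial\mathcal Q$; this is why the volume bound $V(Q\cap \mathcal Q)$ rather than $V(Q)$ appears in the definition of empty and drives the estimate.
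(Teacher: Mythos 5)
Your proof is correct and follows essentially the same argument as the paper: bound the mass in the empty cubes using their definition, disjointness, and $V(\mathcal Q)=1$, then subtract from the total mass $\ge 5^{-d}$ to obtain $c = \frac{9}{10}\,5^{-d}$.
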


\begin{proof}
First let us note that the mass of $f$ in 
the cubes ${Q_{x_{i,1}}}$ that are empty cannot be very big:
\[
\sum_{i:\ Q_{x_{i,1}}\,\text{empty}} \int_{ Q_{x_{i,1}} } |f| 
	\le \frac{5^{-d}}{10}\, 
\sum_{i:\ Q_{x_{i,1}}\,\text{empty}}  V( Q_{x_{i,1}} \cap \mathcal Q) \le \frac {5^{-d}}{10}.
\]
Thus the integral over the full cubes satisfies
\[
\sum_{i:\ Q_{x_{i,1}}\,\text{full}} \int_{ Q_{x_{i,1}} } |f|  \ge 5^{-d}\frac 
{9}{10}.\qedhere
\]
\end{proof}

Denote by $\F^+$ the set of indices of the cubes $Q_i$ that 
are full, balanced, and that are unbalanced in the sense that $V_f^+(Q_i)$ dominates $V_f^-(Q_i)$
(\eqref{perfect+} holds). 
Similarly, we denote by $\F^-$ the indices corresponding to those cubes 
$Q_i$  that are full, balanced, and that are unbalanced in the sense that 
$V_f^-(Q_i)$ dominates $V_f^+(Q_i)$ (\eqref{perfect-} holds). 

\begin{lemma}\label{lem:cubes}
For $i \in \F^+$,
each of the following estimates holds:
\begin{equation}\label{minus-plus}
\int_{Q_i} f^- \leq \frac{1}{9} \int_{Q_i} f^+,
\end{equation}
\begin{equation}\label{plus2}
\int_{Q_i} f^+ \geq \frac 9{10}\, \int_{Q_i} \vert f \vert.
\end{equation}
Analogous estimates hold for $i \in \F^-$.
\end{lemma}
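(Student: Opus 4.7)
The plan is to combine three ingredients attached to a cube $Q_i$ with $i\in\F^+$: the pointwise bound $f^-\leq\|f\|_\infty$, the exact unbalancedness identity \eqref{perfect+}, and the fullness condition. The estimate \eqref{plus2} comes first, and \eqref{minus-plus} then follows by a one-line algebraic manipulation using $|f|=f^++f^-$.

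For \eqref{plus2}, I would start by bounding the negative mass trivially as
\[
\int_{Q_i} f^- \;\leq\; \|f\|_\infty\,V_f^-(Q_i).
\]
Now I would use \eqref{perfect+} in the form $V_f^-(Q_i)=V_f^+(Q_i)/(100\cdot 5^d\,\|f\|_\infty)$ to cancel the $\|f\|_\infty$ factor and get
\[
\int_{Q_i} f^- \;\leq\; \frac{V_f^+(Q_i)}{100\cdot 5^d} \;\leq\; \frac{V(Q_i\cap\mathcal Q)}{100\cdot 5^d},
\]
where in the last step I only use $V_f^+(Q_i)\leq V(Q_i\cap\mathcal Q)$. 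At this point fullness of $Q_i$ gives $V(Q_i\cap\mathcal Q)\leq 10\cdot 5^d \int_{Q_i}|f|$, so the chain of inequalities collapses to
\[
\int_{Q_i} f^- \;\leq\; \frac{1}{10}\int_{Q_i}|f|.
\]
Since $\int_{Q_i}|f|=\int_{Q_i}f^+ + \int_{Q_i}f^-$, this immediately yields \eqref{plus2}.

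For \eqref{minus-plus}, I would simply substitute $\int_{Q_i}|f|=\int_{Q_i}f^++\int_{Q_i}f^-$ into the bound $\int_{Q_i}f^-\leq\tfrac{1}{10}\int_{Q_i}|f|$ derived above, giving $\tfrac{9}{10}\int_{Q_i}f^-\leq\tfrac{1}{10}\int_{Q_i}f^+$, i.e.\ $\int_{Q_i}f^-\leq\tfrac{1}{9}\int_{Q_i}f^+$. For $i\in\F^-$ the roles of $f^+$ and $f^-$ are interchanged and the same argument applies verbatim using \eqref{perfect-} in place of \eqref{perfect+}.

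There is no real obstacle here; the only thing one must take care of is to track where the large constant $100\cdot 5^d$ in the unbalancedness definition is used to absorb both the $\|f\|_\infty$ bound on $f^-$ and the $5^{-d}/10$ factor from fullness, which is precisely why those constants were chosen in that form in the preceding definitions.
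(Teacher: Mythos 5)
Your proof is correct and uses the same three ingredients as the paper (the pointwise bound $f^-\le\|f\|_\infty$, the unbalancedness identity \eqref{perfect+}, and fullness), differing only in a trivial reorganization: you first isolate the intermediate estimate $\int_{Q_i}f^-\le\tfrac1{10}\int_{Q_i}|f|$ and deduce both conclusions from it, whereas the paper bounds $\int_{Q_i}f^-$ and $\int_{Q_i}f^+$ separately in terms of $V(Q_i\cap\mathcal Q)$ to get \eqref{minus-plus} and then derives \eqref{plus2} from it. The arithmetic checks out in both routes, so this is essentially the paper's argument.
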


\begin{proof} 
If $i\in\F^+$ then, by \eqref{unbalanced+},  
\[
\int_{Q_i} f^- \le \|f\|_\infty V_f^-(Q_i)
\le \frac{5^{-d}}{100}\, V_f^+(Q_i) \le \frac{5^{-d}}{100}\,V(Q_i\cap \mathcal Q).
\]
Since $Q_i$ is full we then have 
\begin{align*}
\int_{Q_i} f^+ & =  \int_{Q_i} |f| -\int_{Q_i} f^-\\
& \geq \frac{5^{-d}}{10}\, V(Q_i\cap \mathcal Q)  - \frac{5^{-d}}{100}\,V(Q_i\cap \mathcal Q)\\
& = \frac{9\times 5^{-d}}{100}\, V(Q_i\cap \mathcal Q).
\end{align*}
These estimates together imply \eqref{minus-plus}.
Finally, 
\[
\int_{Q_i} f^+  = \int_{Q_i} |f| -\int_{Q_i} f^- 
	\geq \int_{Q_i} |f|  - \frac{1}{9} \int_{Q_i} f^+,
\]
which leads to \eqref{plus2}.	
 \end{proof}
 
We are now ready to bound from below both the Hausdorff measure of the 
zero set and the Vaserstein distance between $f^+$ and $f^-$. 
That the Hausdorff measure of the zero set cannot be small 
comes from the fact that the cubes $Q_i$ are balanced.
That the Vaserstein distance between $f^+$ and $f^-$ cannot be small 
comes from the fact that they are unbalanced. 
We first estimate from below the Hausdorff measure of $Z(f)$ 
in $\mathcal Q$. 

\begin{prop}\label{prop:3}
We have:
\begin{equation}\label{area}
\mathcal H^{d-1}(Z(f)\cap \mathcal Q)\gtrsim \frac 1{\|f\|_\infty^{(d-1)/d}} \sum_{i} 
l(Q_i)^{d-1}. 
\end{equation}
\end{prop}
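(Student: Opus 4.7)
The strategy is to apply a relative isoperimetric inequality inside each cube $Q_i$, using the fact that both $V_f^+(Q_i)$ and $V_f^-(Q_i)$ are a definite fraction of the volume of $Q_i\cap\mathcal Q$ (consequence of the exact balance relations \eqref{perfect+}--\eqref{perfect-}), and the fact that the $Q_i$ are pairwise disjoint.

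Fix $i\in\F^+$ and set $D_i := Q_i\cap \mathcal Q$. By construction $l(Q_i)\leq 2$, and $Q_i$ is centered at a point of $\mathcal Q=[0,1]^d$, so each side of the rectangular box $D_i$ has length between $l(Q_i)/2$ and $l(Q_i)$. Thus $D_i$ has aspect ratio at most $2$ and $V(D_i)\geq 2^{-d}\,l(Q_i)^d$. On such a box the relative isoperimetric inequality holds with a dimensional constant (after a bi-Lipschitz rescaling to the unit cube), so applied to $E_i:=\{f>0\}\cap D_i$ it will yield
\[
\mathcal H^{d-1}\bigl(Z(f)\cap D_i\bigr) \,\gtrsim\, \min\bigl(V_f^+(Q_i),V_f^-(Q_i)\bigr)^{(d-1)/d}.
\]
Continuity of $f$ and the hypothesis $\mathcal H^{d-1}(Z(f))<\infty$ are used to identify the essential boundary of $E_i$ inside $D_i$ with a subset of $Z(f)\cap D_i$.

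The next step is to lower bound the minimum on the right. For $i\in\F^+$, combining \eqref{perfect+} with $V_f^+(Q_i)+V_f^-(Q_i)=V(D_i)$ gives
\[
V_f^-(Q_i) \,=\, \frac{V(D_i)}{1+100\cdot 5^d\,\|f\|_\infty} \,\gtrsim\, \frac{l(Q_i)^d}{\|f\|_\infty},
\]
where the estimate $\|f\|_\infty\geq \|f\|_1/V(\mathcal Q)=1$ absorbs the $1$ in the denominator and the volume bound on $D_i$ is the one just noted. Feeding this into the isoperimetric inequality produces
\[
\mathcal H^{d-1}(Z(f)\cap D_i) \,\gtrsim\, \frac{l(Q_i)^{d-1}}{\|f\|_\infty^{(d-1)/d}}.
\]
The case $i\in\F^-$ is identical by symmetry (using \eqref{perfect-}).

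Finally, since the cubes $Q_i$, and hence the $D_i\subset \mathcal Q$, are pairwise disjoint, I would simply add these estimates:
\[
\mathcal H^{d-1}(Z(f)\cap \mathcal Q) \,\geq\, \sum_i \mathcal H^{d-1}(Z(f)\cap D_i) \,\gtrsim\, \frac{1}{\|f\|_\infty^{(d-1)/d}}\sum_i l(Q_i)^{d-1}.
\]
The one step that really requires care is the relative isoperimetric inequality on $D_i$ with a constant depending only on $d$; the rest of the argument is then just bookkeeping with the definitions of balanced, full and unbalanced cubes. The dimensional constant is available here precisely because the boxes $D_i$ have aspect ratio uniformly bounded by $2$, which is exactly what the choice of center $x_i\in\mathcal Q$ guarantees.
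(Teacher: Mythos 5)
Your proof is correct, and it takes a genuinely different and arguably cleaner route than the paper for handling the cubes that meet $\partial\mathcal Q$. The paper splits the family into cubes $Q_i\subset\mathcal Q$ (on which it applies the isoperimetric inequality \eqref{isoperimetric} directly) and cubes that intersect $\partial\mathcal Q$; for the latter it uses a separate argument: since they are disjoint and all touch $\partial\mathcal Q$, one has $\sum_{Q_i\cap\partial\mathcal Q\neq\emptyset} l(Q_i)^{d-1}\lesssim l(\mathcal Q)^{d-1}$, which is then absorbed by applying the relative isoperimetric inequality once to $\mathcal Q$ itself via the balance estimate \eqref{Q0balanced}. You avoid this case distinction entirely by working on the truncated box $D_i=Q_i\cap\mathcal Q$: the observation that each side of $D_i$ lies in $[l(Q_i)/2,\,l(Q_i)]$ (a consequence of $x_i\in\mathcal Q$) gives uniformly bounded aspect ratio, so a single bi-Lipschitz rescaling yields the relative isoperimetric inequality on $D_i$ with a constant depending only on $d$. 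The trade-off is that you need the slightly more general form of the relative isoperimetric inequality on convex bodies with controlled geometry rather than on the cube itself, but this is covered by the references the paper already cites (e.g.\ Ritor\'e--Vernadakis), and in exchange the boundary cubes require no ad hoc treatment. Both proofs rely on the same volume lower bound $\min\bigl(V_f^+(Q_i),V_f^-(Q_i)\bigr)\gtrsim l(Q_i)^d/\|f\|_\infty$ derived from \eqref{perfect+}--\eqref{perfect-} together with $\|f\|_\infty\geq 1$, and both finish by summing over the disjoint $Q_i$.
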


\begin{proof}
We start the proof by considering only the cubes $Q_i$ that are contained in $\mathcal Q$. 
We will deal later with the cubes that intersect the boundary of $\mathcal Q$.

We recall the following relative isoperimetric 
inequality (see \cites{Lions-Facella, Morgan, Ritore}): 
for an open cube $Q$ in $\R^d$ and $K\subset \overline{Q}$,
\begin{equation}\label{isoperimetric}
\mathcal H^{d-1}\big( \partial K\cap Q \big)\gtrsim_d 
	\big(\min\{V(K), V(Q\setminus K)\} \big)^{\frac{d-1}d}.
\end{equation}

Observe that since $Q_i$ is balanced  the volumes in $ Q_i$ 
separated by $Z(f)$ are comparable, up to a factor $\|f\|_\infty$. 
In fact, if 
\[
V_f^-( Q_i) = 100 \times 5^d\,\|f\|_\infty V_f^+( Q_i),
\] 
since  $\|f\|_\infty \geq 1$, we deduce from 
$V( Q_i )=V_f^+( Q_i) + V_f^-( Q_i)$ 
that
\[
V_f^+( Q_i) \geq \frac{V( Q_i)}{(1+100\times 5^d)\, \|f\|_\infty}
\approx\frac{l(Q_i)^d}{\|f\|_\infty}.
\]
Similarly, if $V_f^+( Q_i) = 100 \times 5^d\, \|f\|_\infty 
V_f^-( Q_i)$, 
we find that 
\[
V_f^-( Q_i) \geq \frac{V( Q_i)}{(1+100\times 5^d)\, \|f\|_\infty}
		\approx\frac{l(Q_i)^d}{\|f\|_\infty}.
\]
Then, by the relative isoperimetric inequality \eqref{isoperimetric}, 
\begin{align*}
\mathcal H^{d-1}\big( Z(f)\cap  Q_i \big) & 
	\gtrsim \min\big\{ [V_f^+( Q_i)]^{(d-1)/d}, 
		[V_f^-( Q_i)]^{(d-1)/d} \big\}\\
	& \gtrsim \frac{l(Q_i)^{d-1}}{\|f\|_\infty^{(d-1)/d}}.
\end{align*}
Since the cubes $Q_i$ are disjoint,
\[
\sum_{i\colon Q_i \subset \mathcal Q} \frac{l(Q_i)^{d-1}}{\|f\|_\infty^{(d-1)/d}}
\lesssim
\sum_{i\colon Q_i \subset \mathcal Q}\mathcal  H^{d-1} \big( Z(f)\cap  Q_i \big)
\leq
\mathcal H^{d-1}(Z(f)\cap \mathcal Q).
\]

This last estimate holds only for cubes $Q_i$ that are fully inside $\mathcal Q$. 
There may be others that touch the boundary, but for these we have
\[
\sum_{i \colon Q_i \cap \partial \mathcal Q\ne \emptyset } \frac{l(Q_i)^{d-1}}{\|f\|_\infty^{(d-1)/d}}
	\leq  
\frac{l(\mathcal Q)^{d-1}}{\|f\|_\infty^{(d-1)/d}}
	\lesssim 
\mathcal H^{d-1}(Z(f)\cap \mathcal Q) .
\]
The first inequality holds because the cubes are disjoint and all intersect $\partial \mathcal Q$, and the last one 
because of the relative isoperimetric inequality applied to $\mathcal Q$ (see \eqref{Q0balanced}). 
The estimate \eqref{area} now follows.
\end{proof}

Now we are going to estimate the transport realized in each of the 
full cubes $Q_i$ making use of the fact that they are unbalanced. 

\begin{prop}\label{transport} We have the following estimate of the 
Vaserstein distance between $f^+$ and $f^-$:
\[
 W_1(f^+, f^-) \gtrsim \frac 1{\|f\|_\infty}\sum_{i} 
\frac{\big(\int_{Q_i} |f|\big)^2} {\ l(Q_i)^{d-1}}. 
\]
\end{prop}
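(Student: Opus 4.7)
My plan is to use the Monge--Kantorovich duality
\[
W_1(f^+, f^-) = \sup_{h\in\Lip_{1,1}(\R^d)} \Bigl|\int h f\Bigr|
\]
and construct a single $1$-Lipschitz test function $h$ by gluing together hat-shaped pieces, one per cube $Q_i$. Writing $m_i = \int_{Q_i}|f|$, fix the scale
\[
\delta_i = c\, \frac{m_i}{\|f\|_\infty\, l(Q_i)^{d-1}}
\]
for a small dimensional constant $c>0$ to be chosen. Since $m_i \le \|f\|_\infty\, l(Q_i)^d$, we have $\delta_i \le c\, l(Q_i)$, so we may assume $\delta_i \le l(Q_i)/4$. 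Let $\epsilon_i=+1$ for $i\in\F^+$ and $\epsilon_i=-1$ for $i\in\F^-$, and set
\[
h_i(x) = \epsilon_i\, \min\bigl\{\delta_i,\, \operatorname{dist}(x, \partial Q_i)\bigr\} \ \text{for } x\in Q_i, \quad h_i\equiv 0 \ \text{off } Q_i.
\]
Each $h_i$ is $1$-Lipschitz on $\R^d$, vanishes on $\partial Q_i$, and reaches the value $\epsilon_i\delta_i$ on the concentric subcube $Q_i^{(\delta_i)}$ of side length $l(Q_i)-2\delta_i$.

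First I would check that $h=\sum_i h_i$ is globally $1$-Lipschitz. Because the open cubes $Q_i$ are pairwise disjoint (Besicovitch selection) and each $h_i$ vanishes continuously on $\partial Q_i$, the function $h$ is continuous on $\R^d$; along any straight segment $h$ coincides piecewise with some $\epsilon_i h_i$ or with $0$, so its tangential derivative has absolute value at most $1$ almost everywhere, giving $|h(x)-h(y)|\le|x-y|$. Then I would estimate $\int h f$ cube by cube. For $i\in\F^+$ (the case $\F^-$ is symmetric), the bounds $h_i\equiv\delta_i$ on $Q_i^{(\delta_i)}$, $|Q_i\setminus Q_i^{(\delta_i)}|\le 2d\,\delta_i\, l(Q_i)^{d-1}$, together with Lemma~\ref{lem:cubes} (giving $\int_{Q_i}f^+\ge\tfrac{9}{10}m_i$ and $\int_{Q_i}f^-\le\tfrac{1}{10}m_i$), yield
\[
\int h_i\, f \;\ge\; \delta_i\Bigl(\tfrac{9}{10}m_i - 2d\,\|f\|_\infty\, \delta_i\, l(Q_i)^{d-1}\Bigr) - \delta_i\cdot \tfrac{1}{10}m_i.
\]
Substituting the definition of $\delta_i$, the bracket simplifies to $m_i(\tfrac{4}{5}-2dc)$, which is $\gtrsim m_i$ once $c$ is small. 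Hence $\int h_i\, f \gtrsim m_i^2/(\|f\|_\infty\, l(Q_i)^{d-1})$, and summing over $i\in\F^+\cup\F^-$ together with the duality delivers the desired bound.

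The step that demands the most care is the global $1$-Lipschitz estimate for the glued function $h=\sum_i h_i$: it depends crucially on the pairwise disjointness of the $Q_i$ and on each piece $h_i$ tapering continuously to zero at $\partial Q_i$, which is what allows pieces with opposite signs $\epsilon_i$ in adjacent cubes to coexist without violating the Lipschitz bound. The choice of $\delta_i$ is the natural one that balances the gain $\delta_i\int f^+$ against the twin losses from the $f^-$-mass inside $Q_i$ and from the boundary layer $Q_i\setminus Q_i^{(\delta_i)}$, both of which scale linearly in $\delta_i$.
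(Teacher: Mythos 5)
Your proof is correct, but it takes the dual route where the paper takes the primal one. You build an explicit $1$-Lipschitz test function $h=\sum_i h_i$, with $h_i(x)=\epsilon_i\min\{\delta_i,\operatorname{dist}(x,\partial Q_i)\}$ on $Q_i$ and $\delta_i\simeq \bigl(\int_{Q_i}|f|\bigr)/\bigl(\|f\|_\infty l(Q_i)^{d-1}\bigr)$, and then compute $\int hf$ directly via the Kantorovich--Rubinstein duality. The paper instead bounds the cost of an arbitrary coupling $\rho$ from below: for each full cube it introduces the measure $\nu(A)=\rho(A\times Q_i^c)$ on $Q_i$, shows $\nu(Q_i)\gtrsim\int_{Q_i}|f|$ via Lemma~\ref{lem:cubes}, and then estimates $\int_{Q_i}d(x,\partial Q_i)\,d\nu$ from below through the distribution function with crossover scale $t_i=\nu(Q_i)/(C\|f\|_\infty\,l(Q_i)^{d-1})$. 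The two arguments are dual to one another in a concrete sense: your $\delta_i$ plays exactly the role of the paper's $t_i$, and your glued hat function is the dual potential that makes the primal lower bound essentially tight. Your approach has the advantage of requiring only the construction and verification of a single competitor; the paper's has the advantage of not requiring any gluing argument. One place to be careful (but which you handle correctly) is the global $1$-Lipschitz check for $h$: it rests on the pairwise disjointness of the open cubes $Q_i$ and the vanishing of each $h_i$ on $\partial Q_i$; when $x\in Q_i$ and $y\in Q_j$ with $i\neq j$ one should pick the exit point $p\in[x,y]\cap\partial Q_i$ and entry point $q\in[x,y]\cap\partial Q_j$ and observe $|h(x)-h(y)|\le|h_i(x)|+|h_j(y)|\le|x-p|+|q-y|\le|x-y|$. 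With that spelled out and a small dimensional constant $c$ (say $c=1/(10d)$), the computation $\int h_i f\ge \delta_i m_i(4/5-2dc)\gtrsim m_i^2/(\|f\|_\infty l(Q_i)^{d-1})$ is correct, including when $Q_i$ protrudes from $\mathcal Q$, since $f$ has been extended by zero.
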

\begin{proof}
By definition
\[
 W_1(f^+,f^-) = \inf_{\rho} \int_{\mathcal Q\times \mathcal Q} \vert x-y\vert \,d\rho(x,y),
\]
where $\rho$ is a transport plan between $f^+$ and $f^-$, 
that is $\rho$ is a measure supported on $\mathcal Q\times \mathcal Q$ 
such that for any measurable set $A\subset \mathcal Q$, 
\[
 \int_{A\times \mathcal Q} d\rho(x,y)=\int_A f^+, \qquad \int_{\mathcal Q\times A} 
d\rho(x,y)=\int_A f^-.
\]
We need a uniform lower bound on the transport required for a
general plan $\rho$.
We have,
\begin{align*}
 W_1(f^+,f^-) &\geq \inf_{\rho} \sum_{i} \int_{Q_i\times \mathcal Q} 
 	\vert x-y\vert\, d\rho(x,y)\\
	& \geq \inf_{\rho} \sum_{i} \int_{Q_i\times Q_i^{c}} 
		\vert x-y\vert\, d\rho(x,y)\\
 	&\geq \inf_{\rho} \sum_{i}  \int_{Q_i\times Q_i^{c}} d(x,\partial 
Q_i)\, 
		d\rho(x,y).
\end{align*}
Here, $d(x,\partial Q_i)$ is the distance from $x\in Q_i$ 
to the boundary of the cube $Q_i$.

We now estimate the transport for each $Q_i$.  
Assume $i\in\F^+$, the case $i\in\F^-$ being completely analogous. 
Given any transport plan $\rho$, write
\begin{equation}\label{nu}
 \int_{Q_i\times Q_i^{c}} d(x,\partial Q_i)\, d\rho(x,y)=\int_{Q_i} 
d(x,\partial Q_i)\, d\nu(x),
\end{equation}
where $\nu=\nu_{\rho,i}$ is the measure in $Q_i$ defined by 
$\nu(A)=\rho(A\times Q_i^{c})=\int_{A\times Q_i^{c}}  d\rho(x,y)$, for 
$A\subset Q_i$. By definition $\nu(A)\leq \rho(A\times \mathcal Q)=\int_A f^+$, so 
$\nu \le \chi_{Q_i}f^+dV$. In particular 
\begin{equation}\label{nuup}
\nu(Q_i)\leq \int_{Q_i} f^+.
\end{equation}
On the other hand
\begin{align*}
\nu(Q_i)&= \rho(Q_i \times Q_i^c) = 
\rho(Q_i \times \mathcal Q) - \rho(Q_i \times Q_i) \\
 &=\int_{Q_i} f^+  - \rho(Q_i \times Q_i) .
\end{align*}
Since, by \eqref{minus-plus},
\[
\rho(Q_i \times Q_i) \leq \rho(\mathcal Q\times Q_i)
=\int_{Q_i} f^-\leq \frac 19 \int_{Q_i} f^+
\]
we deduce, using \eqref{plus2}, that  
\begin{equation}\label{nudown}
 \nu(Q_i)\geq \frac 89 \int_{Q_i} f^+ \geq \frac45  \int_{Q_i} \vert f \vert.
\end{equation}
Next, writing the integral in terms of the distribution function,
\begin{align}
\int_{Q_i} d(x, \partial Q_i) \,d\nu(x) 
 	& = \int_0^{l(Q_i)} \nu(\{x\in Q_i : d(x,\partial Q_i) \ge t\})\, dt 
\nonumber\\
	&=l(Q_i)\nu(Q_i) - \int_0^{l(Q_i)} \nu(\{x\in Q_i : d(x,\partial Q_i) < 
t\})\, dt.			\label{tc1}
\end{align}
Since $\nu \le f^+ \chi_{Q_i}\, dV$ and $f^+$ is bounded, we have that
\begin{align*}
 \nu(\{x\in Q_i: d(x,\partial Q_i) < t\})
 	&\le \int f^+ \, \chi_{Q_i\cap \{d(x,\partial Q_i) < t\}} \\
	& \leq \|f\|_\infty V(Q_i\cap \{d(x,\partial Q_i) < t\})\\
	& \leq C\|f\|_\infty\, t\, l(Q_i)^{d-1},
\end{align*}
for some constant $C$ (depending on the dimension). 
Then, by \eqref{nuup},
\[
\nu(\{x\in Q_i: d(x,\partial Q_i) < t\}) 
	\le \min\big\{ \nu(Q_i), C \|f\|_\infty\, t\, l(Q_i)^{d-1} \big\}. 
\]
The crossover point where $\nu(Q_i)$ dominates being when 
\[
t = t_i = \frac{\nu(Q_i)}{C\|f\|_\infty\, l(Q_i)^{d-1}},
\]
we have by \eqref{tc1} that 
\begin{align*}
\int_{Q_i} d(x, \partial Q_i) d\nu(x) 
	&\geq l(Q_i) \nu(Q_i) - \int_0^{t_i} \|f\|_\infty\, t\, l(Q_i)^{d-1} 
\,dt 
		- \int_{t_i}^{l(Q_i)} \nu(Q_i)\,dt\\
	& = \nu(Q_i)\,t_i - \int_0^{t_i} \|f\|_\infty\, t\, l(Q_i)^{d-1} \,dt \\
	& = \frac{1}{2}\frac{\nu(Q_i)^2}{C\|f\|_\infty\, l(Q_i)^{d-1}}.
\end{align*}

Going back to \eqref{nu} and using the estimate \eqref{nudown} 
gives the estimate
\begin{align*}
 \int_{Q_i\times Q_i^{c}} d(x,\partial Q_i)\, d\rho(x,y)&=\int_{Q_i} d(x, 
\partial Q_i) d\nu(x) \\
&\gtrsim\frac{\nu(Q_i)^2}{\|f\|_\infty\, l(Q_i)^{d-1}}\gtrsim 
\frac{\left(\int_{Q_i} 
|f|\right)^2}{\|f\|_\infty \, l(Q_i)^{d-1}}, 
\end{align*}
which finishes the proof of Proposition~\ref{transport}.
\end{proof}

Finally, to conclude the proof of Theorem~\ref{uncertainty}, we use first 
Proposition~\ref{transport} and \eqref{area} to obtain:
\[
 W_1(f^+, f^-)\,\mathcal  H^{d-1}(Z(f)\cap \mathcal Q) \gtrsim \frac 1{\|f\|_\infty^{2-1/d}} 
 	\sum_{i } 
 		\frac{\left(\int_{Q_i} |f|\right)^2}{l(Q_i)^{d-1}} 
	\sum_{i } l(Q_i)^{d-1}
\]
By the Cauchy-Schwarz inequality for sums, applied in the opposite 
direction to usual, and by Proposition~\ref{prop:1} the result follows:
\[
W_1(f^+, f^-)\mathcal  H^{d-1}(Z(f) \cap \mathcal Q) \gtrsim\frac 1{\|f\|_\infty^{2-1/d}}
	\left(\sum_{i}\int_{Q_i} |f| \right)^2 \gtrsim \frac 
1{\|f\|_\infty^{2-1/d}}.
\qedhere
\]

\section*{Proof of Theorem~\ref{uncertainty-M} (Sketch)}

Let $d$ be the dimension of the manifold $M$ and let $\rho$ be the injectivity radius of $(M,g)$,
that is, the supremum of the values $r>0$ such that the exponential map defines a global 
diffeomorphism from the ball with centre 0 and radius $r$ in $\R^d$ onto its image in $M$. 
For $x\in M$ and $r>0$ let $B(x,r)$ denote the ball of centre $x$ and radius $r$ 
in the distance induced by the metric $g$.

Assume, as before, that $\|f\|_{L^1(M)}=1$ and fix $r_0\leq 3\rho$. We start by choosing a ball in $M$ with a substantial part of the $L^1$-norm of $f$: there exist $\epsilon=\epsilon(M)$ and $x_0\in M$ such that 
\[
 \int_{B(x_0,r_0)} |f|\geq \epsilon. 
\]

Denote $\mathcal B= B(x_0,r_0)$.
To adapt the scheme of the previous proof from $\mathcal Q$ to $\mathcal B$ we consider $f$ restricted to $2\mathcal B=B(x_0, 2r_0)$ and extend it outside by 0. We still denote this function by $f$. 

Assume first that $\mathcal B$ is such that
\begin{equation}\label{nine-for-M}
 \frac {\epsilon}{2\| f\|_\infty}\leq \frac{V_f^+(\mathcal B)}{V_f^-(\mathcal B)}\leq \frac{2\| f\|_\infty}{\epsilon}.
\end{equation}
This plays the role of \eqref{Q0balanced} in this proof. The factor $\epsilon$ everywhere is just (the bound of) the $L^1$-norm of $f$ on $ \mathcal B$.

Here we call a ball $B=B(x,r)$  \emph{balanced} if
\[
\frac{\epsilon}{100 \times 5^d\, \|f\|_\infty} \leq \frac{V_f^+(B)}{V_f^-(B)} \leq \frac{100\times 5^d\,\|f\|_\infty}{\epsilon},
\]
and \emph{full} if 
\[
 \int_{ B} |f| \geq \epsilon\, \frac{5^{-d}}{10}\,  V(B\cap 2\mathcal B).
\]

For every $x\in\mathcal B$, let $B_x=B(x,r(x))$ be the ball centered a $x$ and with radius $r(x)$ chosen so that either
\[
V_f^+(B_x)  = \frac{100}{\epsilon}\times 5^d\, \|f\|_\infty V_f^-(B_x)
\]
or
\[
V_f^-(B_x) =  \frac{100}{\epsilon}\times 5^d\, \|f\|_\infty V_f^+(B_x).
\]
Such a radius $r(x)$ exists and is smaller than the injectivity radius, 
because $f$ vanishes outside $2\mathcal B$.

As in the cube case, by the Besicovitch covering theorem there are finitely many families of disjoint balls 
$(B_{x_{i,j}})_{i\geq 1}$ that cover $\mathcal B$. 
We can then select a family, called $(B_{x_{i,1}})_{i\geq 1}$, such that
\[
 \sum_{i\geq 1} \int_{B_{x_{i,1}}} |f|\geq 5^{-d}\int_{\mathcal B} |f|\geq \epsilon\, 5^{-d}.
\]
From these balls we select those that are full, and we relabel them as $(B_i)_{i\geq 1}$. 
With this family of balls, which plays the role of the family $(Q_i)_i$ in the case of the cube, 
we can repeat, mutatis mutandis, the arguments that prove the equivalents of 
Propositions ~\ref{prop:1}, \ref{prop:3} and \ref{transport}, 
and therefore the inequality in Theorem~\ref{uncertainty-M}. 
In the proof of Proposition~\ref{prop:3} we separate the balls $B_i$ inside $2\mathcal B$, 
which are dealt with as before, and those that intersect the boundary of $2\mathcal B$. 
For these ones we use that $\sum_i r_i^{d-1}\lesssim\mathcal H^{d-1}(\partial \mathcal B)$, 
since the centres of the disjoint balls $B_i$ are always in $\mathcal B$.

In case $\mathcal B$ does not satisfy \eqref{nine-for-M} the desired estimate is straightforward. On the one hand, the argument of Proposition~\ref{transport} applied just to the ball $\mathcal B$ yields
\[
 W_1(f^+,f^-)\gtrsim \frac 1{\|f\|_\infty} \frac{\left(\int_{\mathcal B} |f|\right)^2}{r_0^{d-1}}\geq \frac 1{\|f\|_\infty}\frac{\epsilon^2}{r_0^{d-1}}.
\]
On the other hand,
the relative isoperimetric inequality applied to any ball $B(x,r_0)$ with $x\in Z(f)$ yields
\[
 \mathcal H^{d-1}(Z(f)) \gtrsim r_0^{\frac{d-1}d}.
\]
Together these lead to 
\[
  W_1(f^+,f^-)\, \mathcal H^{d-1}(Z(f))\|f\|_\infty \gtrsim_{(M,g)} \|f\|^2_{L^1(M)}.
\]

\section*{An example}
\noindent Next we show that the exponent $2-1/d$ in Theorem~\ref{uncertainty} 
cannot be replaced by any power smaller than 1.
In particular, Steinerberger's uncertainty principle \eqref{SBdim2} in 
dimension~2 is best possible in this sense. 
\begin{prop}\label{example}
Let $\varepsilon>0$. 
There is a continuous function $f_\varepsilon:Q_0\to \R$ such that 
\begin{itemize}
 \item [(i)] $\|f_\varepsilon\|_\infty \simeq \varepsilon^{-1}$ and 
$\|f_\varepsilon\|_1
\simeq 1$,
 \item[(ii)] $Z(f_\varepsilon) = \{x\in [0,1]^d: x_d = 1/2\}$; hence 
$\mathcal  H^{d-1}(Z(f_\varepsilon)) = 1$,
\item[(iii)] $W_1(f^+_\varepsilon, f^-_\varepsilon) \simeq \varepsilon$.
\end{itemize}
Thus, the inequality
\[
 W_1(f^+_\varepsilon, f^-_\varepsilon)\mathcal  H^{d-1}(Z(f_\varepsilon)) 
\left(\frac{\|f_\varepsilon\|_\infty}{\|f_\varepsilon\|_1}\right)^\alpha 
\gtrsim \|f_\varepsilon\|_1,
\] 
does not hold in general for any exponent $\alpha < 1$.
\end{prop}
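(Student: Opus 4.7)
The plan is to take $f_\varepsilon$ depending only on the last coordinate, reducing the construction to a one-dimensional problem. Specifically, set $f_\varepsilon(x_1,\ldots,x_d) = g_\varepsilon(x_d)$ for a continuous function $g_\varepsilon : [0,1] \to \R$ that is odd about $t = 1/2$. The function $g_\varepsilon$ will be written as the sum of a tall, thin \emph{antisymmetric tent} $T_\varepsilon$, supported in $[1/2-\varepsilon,1/2+\varepsilon]$ with peak absolute value $\varepsilon^{-1}$ (positive on the left half-interval and negative on the right), plus a small linear \emph{bulk} term $s_\varepsilon(t) = \varepsilon(1/2 - t)$ that prevents $g_\varepsilon$ from acquiring extra zeros outside the spike region.

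For property (i), the tent forces $\|g_\varepsilon\|_\infty \simeq \varepsilon^{-1}$, while a direct integration yields $\int |T_\varepsilon| \simeq 1$ and $\int |s_\varepsilon| \lesssim \varepsilon$, so $\|g_\varepsilon\|_1 \simeq 1$. For (ii), since $T_\varepsilon$ and $s_\varepsilon$ share the same sign wherever $T_\varepsilon \neq 0$, the function $g_\varepsilon$ vanishes on $[0,1]$ only at $t = 1/2$, and therefore $Z(f_\varepsilon) = \{x_d = 1/2\}$ with $\mathcal H^{d-1}$-measure exactly $1$.

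Property (iii) requires matching upper and lower bounds. For the upper bound $W_1(f_\varepsilon^+, f_\varepsilon^-) \lesssim \varepsilon$, I would exhibit an explicit transport plan that splits the problem in two: first, transport the tent's positive mass to its mirror image across $\{x_d = 1/2\}$ (unit cost $\lesssim \varepsilon$ on mass $\simeq 1$); second, transport the bulk's positive mass, whose total size is $O(\varepsilon)$, to the bulk's negative mass (unit cost $\leq 1$ on mass $\lesssim \varepsilon$). Both contributions are $O(\varepsilon)$. For the lower bound $W_1 \gtrsim \varepsilon$, I would apply the Kantorovich dual formulation \eqref{dual} with the $1$-Lipschitz test function $h(x) = 1/2 - x_d$; a direct computation yields $\int h\, f_\varepsilon\, dV \simeq \varepsilon$ (the tent contributes $\simeq \varepsilon$ through $\int_{|t-1/2|\le\varepsilon}(t-1/2)^2\varepsilon^{-2}\,dt$, and the bulk contributes $\varepsilon\int(1/2-t)^2\,dt \simeq \varepsilon$).

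Granted (i)--(iii), the left-hand side of the claimed inequality reads
\[
W_1(f_\varepsilon^+, f_\varepsilon^-) \, \mathcal H^{d-1}(Z(f_\varepsilon)) \, \Bigl(\tfrac{\|f_\varepsilon\|_\infty}{\|f_\varepsilon\|_1}\Bigr)^{\alpha} \simeq \varepsilon \cdot 1 \cdot \varepsilon^{-\alpha} = \varepsilon^{1-\alpha},
\]
which tends to $0$ as $\varepsilon \to 0^+$ whenever $\alpha < 1$, contradicting the alleged inequality. The main obstacle is calibrating the bulk correctly: its amplitude must be $O(\varepsilon)$ so that the total transport cost stays $O(\varepsilon)$, yet the bulk must remain strictly nonzero off $\{1/2\}$ so as not to enlarge $Z(f_\varepsilon)$; the tent-plus-bulk splitting is precisely the device that reconciles these two competing constraints.
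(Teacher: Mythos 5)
Your construction is essentially the paper's: a function depending only on $x_d$, built as a tall narrow odd spike of height $\varepsilon^{-1}$ near $x_d=1/2$ plus a small nonvanishing bulk to pin the zero set to the hyperplane $\{x_d=1/2\}$, with the upper bound on $W_1$ obtained from an explicit reflection-type transport plan and the lower bound from Kantorovich duality with the $1$-Lipschitz test function $x_d-\tfrac12$. The minor differences --- a tent rather than a piecewise-linear dip for the spike, a linear bulk of amplitude $\varepsilon$ instead of a constant bulk of amplitude $\varepsilon^2$, and splitting the transport plan into two pieces instead of using a single reflection plan across $\{x_d=1/2\}$ --- are cosmetic and affect none of the estimates.
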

\begin{proof}
The construction is as follows. 
Write $x \in \R^d$ as $x = (x_{d-1},x_d)$ where $x_{d-1} \in 
\R^{d-1}$. 
Take the function $f_\varepsilon(x) = h_\varepsilon(x_d)$ where the graph of 
$h_\varepsilon$ is as in the picture:

\begin{center} 
\ifx\XFigwidth\undefined\dimen1=0pt\else\dimen1\XFigwidth\fi
\divide\dimen1 by 2945
\ifx\XFigheight\undefined\dimen3=0pt\else\dimen3\XFigheight\fi
\divide\dimen3 by 3137
\ifdim\dimen1=0pt\ifdim\dimen3=0pt\dimen1=4143sp\dimen3\dimen1
  \else\dimen1\dimen3\fi\else\ifdim\dimen3=0pt\dimen3\dimen1\fi\fi
\tikzpicture[x=+\dimen1, y=+\dimen3]
{\ifx\XFigu\undefined\catcode`\@11
\def\temp{\alloc@1\dimen\dimendef\insc@unt}\temp\XFigu\catcode`\@12\fi}
\XFigu4143sp
\ifdim\XFigu<0pt\XFigu-\XFigu\fi
\clip(4800,-7440) rectangle (7745,-4303);
\tikzset{inner sep=+0pt, outer sep=+0pt}
\pgfsetlinewidth{+7.5\XFigu}
\filldraw  (4843,-5772) circle [radius=+28];
\filldraw  (6082,-5772) circle [radius=+28];
\filldraw  (7460,-5772) circle [radius=+28];
\filldraw  (5669,-5772) circle [radius=+28];
\filldraw  (6578,-5772) circle [radius=+28];
\filldraw  (5220,-5760) circle [radius=+28];
\filldraw  (6930,-5760) circle [radius=+28];
\draw 
(4870,-5827)--(5669,-5827)--(5807,-7425)--(6358,-4395)--(6578,-5717)--(7460,
-5717);
\draw (4843,-5772)--(7460,-5772);
\pgftext[base,left,at=\pgfqpointxy{6100}{-5930}] {$0.5$}
\pgftext[base,left,at=\pgfqpointxy{4815}{-5717}] {$0$}
\pgftext[base,left,at=\pgfqpointxy{7102}{-5690}] {$\varepsilon^2$}
\pgftext[base,left,at=\pgfqpointxy{6000}{-4500}] {$1/\varepsilon$}
\pgftext[base,left,at=\pgfqpointxy{5862}{-7425}] {$-1/\varepsilon$}
\pgftext[base,left,at=\pgfqpointxy{7560}{-5895}] {$1$}
\pgftext[base,left,at=\pgfqpointxy{4815}{-6000}] {$-\varepsilon^2$}
\pgftext[base,left,at=\pgfqpointxy{6300}{-5715}] {$0.5+\varepsilon$}
\pgftext[base,left,at=\pgfqpointxy{5400}{-5715}] {$0.5-\varepsilon$}
\pgftext[base,left,at=\pgfqpointxy{6885}{-5940}] {$x$}
\pgftext[base,left,at=\pgfqpointxy{7515}{-5715}] {$x_d$}
\pgftext[base,left,at=\pgfqpointxy{5175}{-5715}] {$\tilde x$}
\endtikzpicture
\end{center}

Properties (i) and (ii) of the function $f_\varepsilon$ are then immediate. 

The function $h_\varepsilon$ is symmetric about $x_d=0.5$, so that 
$h_\varepsilon(1-x_d)=-h_\varepsilon (x_d)$, 
$x_d\in [0,1]$. 
For $x = (x_{d-1},x_d) \in Q_0$ we write $\widetilde x =\widetilde x(x)$ 
for the point 
$(x_{d-1},1-x_d) \in Q_0$, the reflection of $x$ in the hyperplane $x_d = 1/2$.
Observe that $|x-\widetilde x|=|1-2x_d|$.
Then, $f_\varepsilon^+(\widetilde x) = f_\varepsilon^-(x)$.

To prove the upper bound in (iii), consider the following transport plan
\[
 \rho(x,y) = f_\varepsilon^+(x)\,\delta_{\widetilde x}(y).
\]
Notice that it has the correct marginals:
\begin{align*}
\int_{y\in Q_0} d\rho(x,y) 
	& =   f_\varepsilon^+(x) \int_{y\in Q_0} \delta_{\widetilde 
x}(y) 
		=f_\varepsilon^+(x), \\
\int_{x\in Q_0} d\rho(x,y) 
	& = \int_{x\in Q_0} f_\varepsilon^+(x)\,\delta_{\widetilde 
x}(y)
		 =f_\varepsilon^+(\widetilde y) = f_\varepsilon^-(y).
\end{align*}
Therefore,
\begin{align*}
W_1(f^+_\varepsilon, f^-_\varepsilon) 
	& \le \iint_{Q_0\times Q_0} \vert x-y\vert \, d\rho(x,y)\\
	&  = \int_{x \in Q_0} f_\varepsilon^+(x) \int_{y\in Q_0} \vert 
x-y\vert\,  
				\delta_{\widetilde x}(y)\\
	&  = \int_{x \in Q_0}  f_\varepsilon^+(x) \, \vert x-\widetilde 
x\vert \,dV(x)\\
	&  = 2\int_{x \in Q_0}  \big(x_d - \tfrac12\big)\, 
f_\varepsilon^+(x) \, dV(x)
		\lesssim \varepsilon.
	\end{align*}

For the lower bound we use the Monge-Kantorovich duality lemma 
(see \eqref{dual} or \cite{Villani}*{Formula~(6.3)}): 
\[
 W_1(\mu, \nu)=\sup_{g\in\Lip_{1,1}(Q_0)} \left|\int_{Q_0} g \, 
(d\mu-d\nu)\right|.
\] 
Taking $g(x) = x_d - \tfrac12$ we have
\begin{align*}
W_1(f^+_\varepsilon, f^-_\varepsilon)& \geq  
\int_{Q_0} (x_d - \tfrac12) \, 
(f^+_\varepsilon(x)-f^-_{\varepsilon}(x))\, dV(x)\\
& = \int_{Q_0} (x_d - \tfrac12)\, f_\varepsilon (x)\, dV(x)  
 \gtrsim \varepsilon. \qedhere
\end{align*}
\end{proof}
With a similar example one can check that in dimension $2$, 
the inequality \eqref{SBdim2}, that is 
$W_1(f^+,f^-) \, \mathcal  H^1(Z(f)) \, \Vert f\Vert_\infty \geq C \Vert f \Vert^2_1$, 
cannot hold with a constant $C$ greater than $1$.
It is an interesting problem to determine the best constant in the
equality \eqref{SBdim2}.

\section*{Proof of Theorem~\ref{Sturm} on eigenfunctions of the Laplacian}

Let $d$ be the dimension of $M$ and denote by $V$ the volume form  on $M$ associated to $g$ and 
normalised so that $V(M) = 1$. 

In order to construct a transport plan between $f^+$ and $f^-$ we 
consider an auxiliary kernel. 
Let $a:[0,1]\to \R$ be a smooth decreasing function such that 
$a(t)\equiv 1$ in $[0,1/4]$ and $a(t)\equiv 0$ in $[3/4,1]$. 

Observe that $\phi_0(x)=1$ and  therefore
\begin{equation}\label{orthogonal}
 \int_M \phi_i(x)\, dV(x)=\langle\phi_i, 
\phi_0\rangle=0,
 \quad  i \geq 1.
\end{equation}

For any $L >0$, we write 
\[
 B_L(x,y) = \sum_{\lambda_i < L}  a(\lambda_i/L) \,
\phi_i(x) \, \phi_i(y),\ x,\, y \in M.
\]
This is a kernel of Bochner-Riesz type. It is a smoothed out version of the 
Bergman kernel that gives the orthogonal projection from $L^2(M)$ to the span 
generated by the first eigenvector of the Laplacian, in the same spirit as the 
Riesz kernels are a smoothed version of the Dirichlet kernel on
trigonometric sums. See \cites{Sogge, Stein} for the basic properties of the 
kernel.

It is proved in \cite{Sogge}*{Lemma 
2.1} that the 
following pointwise estimates hold: for any $N>0$ there exists $C_N>0$ such that
\begin{equation}\label{estimate}
|B_L(x,y)|\le C_{N} \, \frac{L^{d/2}}{\big[ 1+\sqrt{L\,}d(x,y)\big]^N},\qquad 
x,y\in M
\end{equation}

Now we use a slightly different definition of the Vaserstein distance 
(see \cite{Lev-Quim}*{Formula (43)}):
\[
 W_1(\mu,\nu)=\inf_\rho \iint_{M\times M} d(x,y)\, d|\rho|(x,y),
\]
where $\rho$ are now \emph{signed} measures on $M\times M$ with marginals 
$\rho(\cdot, 
M)=\mu$, $\rho(M,\cdot)=\nu$.
This follows from the 
estimate of the Vaserstein distance using the dual expression 
\eqref{dual}:
\[
 W_1(\mu,\nu) = \sup_{h\in\Lip_{1,1}(M)}  \left|\int_M h(w) 
\Bigl(d\mu(w)\,dV(w) - d\nu(w)\,dV(w)\Bigr)\right|.
\]
A direct estimate yields, for any signed measure $\rho$ with marginals $\mu$ 
and 
$\nu$,
\begin{align*}
 W_1(\mu,\nu) &=\sup_{h\in\Lip_{1,1}(M)}\left|\int_M h(w)
 	\Bigl[\int_{y\in M} d\rho(w,y) - 
		\int_{x\in M} d\rho(x,w)\Bigr]\right|\\
&\le \sup_{h\in\Lip_{1,1}(M)} \int_{M}\int_M |h(x)-h(y)|\, d|\rho|(x,y)\\
& \le \iint_{M\times M} d(x,y)\, d|\rho|(x,y). 
\end{align*}
The other inequality is trivial.

Let $\sigma$ be the pushforward of the measure 
$f^-\, dV$ by the diagonal map $F: M\to M\times M$ defined as 
$F(x) = (x,x)$, that is  $\sigma = F_*(f^-dV)$. 
The measure $\sigma$ is supported on the diagonal 
$ \mathcal{D}=\{(x,y)\in M\times M :x=y\}$.
Define a signed measure on $M\times M$ by
\[
 \rho_L(x,y) = B_L(x,y)\, f(x)\, dV(x)\, dV(y) + \sigma(x,y).
\]
We compute  the marginals of  $\rho_L$. It is 
straightforward that both marginals of $\sigma$ are $f^-dV$, 
so we are left with the computation of the marginals of the first term in 
$\rho_L$.
Clearly
\[
\int_{y\in M} B_L(x,y) \, f(x)\,dV(x)\,dV(y)=
f(x)\,dV(x)\int_{M} B_L(x,y) \,dV(y)
\]
and, by definition and by \eqref{orthogonal},
\begin{align*}
\int_{M} B_L(x,y) \,dV(y) 
&= \sum_{\lambda_i < L}  a\bigg(\frac{\lambda_i}L\bigg)  \phi_i(x) 
\int_M \phi_i(y)\, dV(y)\\
&=\phi_0(x)\, V(M)=1. 
\end{align*}
Hence, the marginal of the first term in $\rho_L$ with respect to $y\in M$ is 
$f(x)\, dV(x)$, and therefore
\[
 \int_{y\in M} d\rho_L(x,y)=f(x)\, dV(x)+ f^-(x)\, 
dV(x)=f^+(x)\, 
dV(x).
\]

For the other marginal we use the orthogonality of $f$ to all 
$\phi_i$, $\lambda_i< L$, 
(since it is a linear combination of eigenfunctions of $-\Delta$ with 
eigenvalues $\lambda_k\geq L$).
Thus,
\begin{align*}
\int_{x\in M} B_L(x,y)\, &f(x)\,dV(x)\,dV(y)\\
& =
\sum_{\lambda_i < L} a\biggl(\frac{\lambda_i}L\biggr) \phi_i(y)\, dV(y)
\int_M \phi_i(x)\,f(x) \, dV(x) = 0,
\end{align*}
and the second marginal of $\rho_L$ reduces to that of $\sigma$, which 
is $f^-(y)\, dV(y)$.

Now that we have checked that $\rho_L$ has the correct marginals let us prove 
the inequality in the statement of Theorem~\ref{Sturm}.

Since $\sigma$ is supported on the diagonal, it does not contribute to this 
last 
integral. Using \eqref{estimate}, we are led to:
\begin{align*}
 W_1(f^+,f^-) &\lesssim \int_M 
\int_M|f(x)|\,\frac{L^{d/2} d(x, y)}{\big[1+\sqrt{L\,}d(x,y)\big]^N}\, 
dV(x)\, dV(y)\\
&\leq
\frac{\|f\|_1}{\sqrt{L\,}} \sup_{x\in M}\int_M 
\frac{L^{d/2}\sqrt{L\,}d(x,y)}{\big[1+\sqrt{L\,}d(x,y)\big]^N} \, dV(y)\\
&\leq \frac{\|f\|_1}{\sqrt{L\,}} \sup_{x\in M}\int_M 
\frac{L^{d/2}}{\big[1+\sqrt{L\,}d(x,y)\big]^{N-1}} \, dV(y).
\end{align*}
We are still free to choose $N$. 
We pick $N > d+1$ (the choice $N=d+2$ works fine) and complete the proof of 
Theorem~\ref{Sturm} by showing that
there is a finite constant $C$ independent of $L$ such that 
\begin{equation}\label{N}
\sup_{x\in M}\int_M \frac{ L^{d/2} }{\big[ 1+\sqrt{L\,}d(x,y) 
\big]^{N-1}}\,dV(y) \leq C.
\end{equation}
Writing the integral in terms of the distribution function and substituting 
$t=\big(1+\sqrt{L\,} 
s\big)^{-N+1}$ 
we obtain 
\begin{align*}
\int_M &\frac{L^{d/2}}{\big[1+\sqrt{L\,} d(x,y)\big]^{N-1}}\,dV(y)\\
&\hskip1cm =L^{d/2} \int_0^1 V\left(\left\{ 
	y \,\colon\,\big[ 1+\sqrt{L\,}\, d(x,y) \big]^{-N+1}>t   
\right\}\right) 
\,dt\\
&\hskip1cm=(N-1) L^{d/2} \int_0^\infty V\big(\{y : d(x,y)<s\}\big) 
	\frac{\sqrt{L\,} ds}{\big(1+\sqrt{L\,} s\big)^N}.
\end{align*}
Since $M$ is compact, 
the volume of a geodesic ball $\{y : d(x,y)<s\}$  
is at most a (global) constant times $s^d$. We deduce, finally, that
\begin{align*}
\int_M \frac{L^{d/2}}{\big[1+\sqrt{L\,} d(x,y)\big]^{N-1}}\,dV(y) 
& \lesssim L^{d/2} \int_0^\infty s^d \frac{\sqrt{L\,} ds}{\big(1+\sqrt{L\,} 
s\big)^N} \\
& =\int_0^\infty \frac {u^d\, du}{(1+u)^N}\ \lesssim 1, 
\end{align*}
which proves \eqref{N} and completes the proof of Theorem~\ref{Sturm}.

\nocite{*}
\begin{bibpropia}

@article{Sturmhist,
    AUTHOR = {B\'{e}rard, Pierre}, 
    AUTHOR = {Helffer, Bernard},
     TITLE = {Sturm's theorem on zeros of linear combinations of
              eigenfunctions},
   JOURNAL = {Expo. Math.},
    VOLUME = {38},
      YEAR = {2020},
    NUMBER = {1},
     PAGES = {27--50},
      ISSN = {0723-0869},
       DOI = {10.1016/j.exmath.2018.10.002},
       URL = {https://doi.org/10.1016/j.exmath.2018.10.002},
}

@article{Furedi,
    AUTHOR = {F\"{u}redi, Zolt\'{a}n},  
    AUTHOR = {Loeb, Peter A.},
     TITLE = {On the best constant for the {B}esicovitch covering theorem},
   JOURNAL = {Proc. Amer. Math. Soc.},
    VOLUME = {121},
      YEAR = {1994},
    NUMBER = {4},
     PAGES = {1063--1073},
      ISSN = {0002-9939},
       DOI = {10.2307/2161215},
       URL = {https://doi.org/10.2307/2161215},
}

@article{Gariboldi,
author = {Gariboldi, Bianca},
author = {Gigante, Giacomo},
year = {2018},
title = {{Optimal asymptotic bounds for 
designs on manifolds}},
note = {\url{http://arxiv.org/abs/1811.12676}},
}

@article{Logunov1,
    AUTHOR = {Logunov, Alexander},
     TITLE = {Nodal sets of {L}aplace eigenfunctions: proof of
              {N}adirashvili's conjecture and of the lower bound in {Y}au's
              conjecture},
   JOURNAL = {Ann. of Math. (2)},
    VOLUME = {187},
      YEAR = {2018},
    NUMBER = {1},
     PAGES = {241--262},
      ISSN = {0003-486X},
       DOI = {10.4007/annals.2018.187.1.5},
       URL = {https://doi.org/10.4007/annals.2018.187.1.5},
}

@article{Lev-Quim,
    AUTHOR = {Lev, Nir},
    AUTHOR = {Ortega-Cerd\`a, Joaquim},
     TITLE = {Equidistribution estimates for {F}ekete points on complex
              manifolds},
   JOURNAL = {J. Eur. Math. Soc. (JEMS)},
    VOLUME = {18},
      YEAR = {2016},
    NUMBER = {2},
     PAGES = {425--464},
      ISSN = {1435-9855},
       DOI = {10.4171/JEMS/594},
       URL = {https://doi.org/10.4171/JEMS/594},
}

@article{Lions-Facella,
    AUTHOR = {Lions, Pierre-Louis},
    AUTHOR = {Pacella, Filomena},
     TITLE = {Isoperimetric inequalities for convex cones},
   JOURNAL = {Proc. Amer. Math. Soc.},
    VOLUME = {109},
      YEAR = {1990},
    NUMBER = {2},
     PAGES = {477--485},
      ISSN = {0002-9939},
       DOI = {10.2307/2048011},
       URL = {https://doi-org.ucc.idm.oclc.org/10.2307/2048011},
}

@book{Morgan,
    AUTHOR = {Morgan, Frank},
     TITLE = {Geometric measure theory},
   EDITION = {Fifth Edition},
      NOTE = {A beginner's guide,
              Illustrated by James F. Bredt},
 PUBLISHER = {Elsevier/Academic Press, Amsterdam},
      YEAR = {2016},
     PAGES = {viii+263},
      ISBN = {978-0-12-804489-6},
}

@article{Ritore,
    AUTHOR = {Ritor\'{e}, Manuel},
    author = {Vernadakis, Efstratios},
     TITLE = {Isoperimetric inequalities in {E}uclidean convex bodies},
   JOURNAL = {Trans. Amer. Math. Soc.},
    VOLUME = {367},
      YEAR = {2015},
    NUMBER = {7},
     PAGES = {4983--5014},
      ISSN = {0002-9947},
       DOI = {10.1090/S0002-9947-2015-06197-2},
       URL = {https://doi-org.ucc.idm.oclc.org/10.1090/S0002-9947-2015-06197-2},
}

@article{Sogge,
    AUTHOR = {Sogge, Christopher D.},
     TITLE = {On the convergence of {R}iesz means on compact manifolds},
   JOURNAL = {Ann. of Math. (2)},
    VOLUME = {126},
      YEAR = {1987},
    NUMBER = {2},
     PAGES = {439--447},
      ISSN = {0003-486X},
       DOI = {10.2307/1971356},
       URL = {https://doi.org/10.2307/1971356},
}

@book{Stein,
   author={Stein, Elias M.},
   title={Harmonic analysis: real-variable methods, orthogonality, and
   oscillatory integrals},
   series={Princeton Mathematical Series},
   volume={43},
   note={With the assistance of Timothy S. Murphy;
   Monographs in Harmonic Analysis, III},
   publisher={Princeton University Press, Princeton, NJ},
   date={1993},
   pages={xiv+695},
   isbn={0-691-03216-5},
}

@article{Steinerberger2,
author = {Sagiv, Amir}, 
author = {Steinerberger, Stefan},
year = {2019},
pages = {},
title = {{Transport and Interface: an 
Uncertainty Principle for the Wasserstein 
distance}},
note = {\url{http://arxiv.org/abs/1905.07450}},
}

@article{Steinerberger0,
author = {Steinerberger, Stefan},
title = {A metric Sturm-Liouville theory in two dimensions},
journal = {Calc. Var. Partial Differential Equations},
number = {1},
year = {2020},
pages = {Paper 12},
}

@article{Steinerberger3,
author = {Steinerberger, Stefan},
year = {2017},
pages = {},
title = {{Oscillatory functions vanish on 
a large set}},
note = {\url{http://arxiv.org/abs/1708.05373}},
}

@article{Steinerberger1,
author = {Steinerberger, Stefan},
year = {2018},
pages = {},
title = {{Wasserstein Distance, Fourier 
Series and Applications}},
note = {\url{http://arxiv.org/abs/1803.08011}},
}

@book{Villani, 
author = {Villani, C\'edric},
title = {Optimal Transport, Old and New}, 
journal= {Grundlehren Math. Wiss. Springer}, 
volume = {Grundlehren Math. Wiss. 338. Springer},
edition = {1},
year = {2008}
}
\end{bibpropia}
\end{document}